\newcommand{\C}{\mathbb{C}}
\newcommand{\bH}{\mathbb{H}}% ATTENTION DIFFERENT
\newcommand{\bT}{\mathbb{T}}
\newcommand{\N}{\mathbb{N}}
\newcommand{\R}{\mathbb{R}}
\newcommand{\F}{\mathbb{F}}
\newcommand{\bS}{\mathbb{S}}% ATTENTION DIFFERENT
\newcommand\cA{{\ensuremath{\mathcal{A}}}\xspace}
\newcommand\cB{{\ensuremath{\mathcal{B}}}\xspace}
\newcommand\cH{{\ensuremath{\mathcal{H}}}\xspace}
\newcommand\sS{{\ensuremath{\mathscr{S}}}\xspace}
\newcommand\sR{{\ensuremath{\mathscr{R}}}\xspace}
\newcommand\sP{{\ensuremath{\mathscr{P}}}\xspace}
\newcommand\sB{{\ensuremath{\mathscr{B}}}\xspace}
\newcommand{\oh}{{\otimes h}}
\newcommand{\lmd}{{\lambda}}
\newcommand{\eps}{\varepsilon}
\newcommand{\alp}{\alpha}
\newcommand{\lam}{\lambda}
\newcommand{\sig}{\sigma}
\newcommand{\Sig}{\Sigma}
\DeclareMathOperator{\hrank}{hrank}
\DeclareMathOperator{\psdrank}{psdrank}
\DeclareMathOperator{\supp}{supp}
\DeclareMathOperator{\re}{Re}
\DeclareMathOperator{\im}{Im}
\DeclareMathOperator{\Vect}{vec}
\DeclareMathOperator{\inter}{int}
\newtheorem{environment}{Environment}[section]
\newtheorem{remark}[environment]{Remark}
\crefname{remark}{remark}{remarks}
\crefname{section}{section}{sections}
\crefname{subsection}{Subsection}{Subsections}
\crefname{subsection}{subsection}{subsections}
\crefname{equation}{equation}{equations}
\crefname{enumi}{enumi}{enumis}
\crefname{figure}{figure}{figures}
\crefname{table}{Table}{Tables}
\newtheorem{alg}[environment]{Algorithm}
\crefname{algorithm}{algorithm}{algorithms}
\crefname{notation}{notation}{notations}
\newtheorem{lemma}[environment]{Lemma}
\crefname{lemma}{lemma}{lemmata}
\crefname{claim}{claim}{claims}
\newtheorem{corollary}[environment]{Corollary}
\crefname{corollary}{corollary}{corollaries}
\newtheorem{theorem}[environment]{Theorem}
\crefname{theorem}{theorem}{theorems}
\crefname{proposition}{Proposition}{Propositions}
\crefname{conjecture}{Conjecture}{Conjectures}
\newtheorem{assumption}[environment]{Assumption}
\crefname{assumption}{Assumption}{Assumptions}
\crefname{fact}{Fact}{Facts}
\newtheorem{problem}[environment]{Problem}
\crefname{problem}{Problem}{Problems}
\crefname{question}{Question}{Questions}
\theoremstyle{definition}
\newenvironment{definition}
{\pushQED{\qed}\definitionx}
{}%{\popQED\enddefinitionx}
\crefname{definitionx}{definition}{definitions}
\newenvironment{example}
{\pushQED{\qed}\examplex}
{\popQED\endexamplex}
\crefname{examplex}{example}{examples}
\numberwithin{equation}{section}
\numberwithin{environment}{section}
\definecolor{NiceBlue}{rgb}{0.2,0.2,0.75}
\definecolor{DarkGreen}{rgb}{0,0.65,0}
\title[Separability of Hermitian Tensors and PSD Decompositions]
{Separability of Hermitian Tensors and PSD Decompositions}
\author[Mareike Dressler]{Mareike~Dressler}
\address{Mareike Dressler, Jiawang Nie, and Zi Yang,
Department of Mathematics,
University of California San Diego,
9500 Gilman Drive, La Jolla, CA, USA, 92093.}
\email{mdressler@ucsd.edu, njw@math.ucsd.edu, ziy109@ucsd.edu}
\author[Jiawang Nie]{Jiawang~Nie}
\author[Zi Yang]{Zi~Yang}
\subjclass[2010]{Primary: 15A69, 65K05, 90C22, 15B48}
\keywords{Hermitian tensor, decomposition, rank, separability,
semidefinite relaxation}
\begin{document}

\begin{abstract}
Hermitian tensors are natural generalizations of Hermitian matrices,
while possessing rather different properties.
A Hermitian tensor is separable if it has a Hermitian decomposition
with only positive coefficients, i.e., it is a sum of rank-1 psd Hermitian tensors.
This paper studies how to detect separability of Hermitian tensors.
It is equivalent to the long-standing quantum separability problem
in quantum physics, which asks to tell if a given quantum state is entangled or not.
We formulate this as a truncated moment problem
and then provide a semidefinite relaxation algorithm to solve it.
Moreover, we study psd decompositions of separable Hermitian tensors.
When the psd rank is low, we first flatten them into cubic order
tensors and then apply tensor decomposition methods to compute psd decompositions.
We prove that this method works well if the psd rank is low.
In computation, this flattening approach can detect separability
for much larger sized Hermitian tensors. 
This method is a good start on determining psd ranks of separable Hermitian tensors.
\end{abstract}

\maketitle

\section{Introduction}

Tensors are of tremendous interest in various areas of mathematics and
have broad applications in signal processing, quantum information theory,
machine learning, higher order statistics, and many more.
For an overview on tensors, we refer to \cite{Landsberg:Book,Lim13}.
Let $m>0$ and $n_1, \ldots, n_m >0$ be positive integers.
Denote by $\C^{n_1\times\cdots \times n_m}$ the space of complex tensors of
\textit{order} $m$ and \textit{dimension} $(n_1,\ldots,n_m)$.
A tensor $\cA \in \C^{n_1\times\cdots \times n_m}$ can be represented
as a multi-array $\cA = (\cA_{i_1...i_m} )$,
with labels $i_k\in\{1,...,n_k\}$, $k =1, \ldots, m$.
The \textit{tensor product} of vectors $u_k\in \C^{n_k}, k=1,\ldots, m$,
is their outer product $u_1 \otimes \cdots \otimes u_m$, i.e.,
$(u_1 \otimes \cdots \otimes u_m)_{i_1\cdots i_m}=(u_1)_{i_1}\cdots (u_m)_{i_m}$ 
for all $i_1,\ldots,i_m$ in the range. Tensors of the form
$u_1 \otimes \cdots \otimes u_m$ are called \textit{rank-1} tensors.
Every tensor is a sum of rank-$1$ tensors.
The smallest number of rank-$1$ tensors for decomposing a tensor
$\cA \in \C^{n_1\times\cdots \times n_m}$ is called the
\textit{rank} of $\cA$ and is denoted by $\rank(\cA)$.
The decomposition that achieves the smallest length is called a
\textit{rank decomposition} (see \cite{Landsberg:Book,Lim13}).
When $m \geq 3$, the complexity of determining ranks of tensors is NP-hard
(see \cite{Hillar:Lim}).
We refer to \cite{CLQY20,DeSLim08,Fri16,Landsberg:Book,Lim13}
for related work about ranks of tensors.

\medskip
Hermitian tensors are natural generalizations of Hermitian matrices,
while they exhibit very different properties. A $2m$-order tensor
$\cH \in \C^{n_1\times\cdots\times n_m\times n_1\times\cdots \times n_m}$
is called {\it Hermitian} if for all labels $i_1, ...,  i_m$ and $j_1, ..., j_m$
\[
\cH_{i_1...i_m j_1...j_m} \,=\,  \overline{\cH}_{j_1...j_m i_1...i_m} .
\]
Throughout the paper,
for an array $u$, $\overline{u}$ denotes its complex conjugate.
We first review some basics about Hermitian tensors.

\subsection{Basic properties of Hermitian tensors}

The set of all complex Hermitian tensors in
$\C^{n_1\times\cdots\times n_m\times n_1\times\cdots \times n_m}$
is denoted as $\C^{[n_1, \ldots,  n_m]}$. It is a vector space
of dimension $n_1^2\cdot n_2^2\cdots n_m^2$ over the real field $\R$.
For given vectors $v_i \in \C^{n_i}$, $i=1,\ldots,m$,
denote the rank-$1$ Hermitian tensor
\begin{equation*} \label{Eq:vectorproduct}
[v_1, v_2,\ldots, v_m]_\oh \, := \,  v_1 \otimes v_2 \cdots \otimes v_m \otimes
\overline{v}_1 \otimes \overline{v}_2 \cdots \otimes \overline{v}_m .
\end{equation*}
Every rank-$1$ Hermitian tensor can be expressed as
$\lambda \cdot [v_1, v_2,\ldots, v_m]_\oh$, for a real scalar $\lambda \in \R$.
For every $\cH \in \C^{[n_1, \ldots,  n_m]}$, it is shown in \cite{Ni:HermTensor}
that there exist vectors $u_i^j\in \C^{n_j}$ and real scalars $\lmd_i\in \R$,
$i=1,\ldots,r$, such that
\begin{equation} \label{Eq:rank-oneHD}
\cH \, = \, {\sum}_{i=1}^r \lmd_i \,
[u_i^1, \ldots, u_i^m]_{\otimes_h}.
\end{equation}
The equation \eqref{Eq:rank-oneHD} is called a \textit{Hermitian decomposition}.
The smallest $r$ in (\ref{Eq:rank-oneHD}) is called the \textit{Hermitian rank}
of $\cH$, for which we denote $\hrank(\cH)$. When $r$ is the smallest,
we call \eqref{Eq:rank-oneHD} a \textit{Hermitian rank decomposition} for $\cH$. {In the Hermitian decomposition \eqref{Eq:rank-oneHD},
the magnitude $|\lambda_i|$ can be absorbed into the vector and
only the sign of $\lambda_i$ matters.}
As shown in \cite{NYHerm20}, when $\cH$ is a real Hermitian tensor
(i.e., $\cH$ has only real entries),
it may not have a real Hermitian decomposition
(i.e., the vectors $u_i^j$ in \eqref{Eq:rank-oneHD}
may not be chosen as real vectors).
The subspace of real Hermitian tensors in
$\C^{[n_1, \ldots,  n_m]}$ is denoted as $\R^{[n_1, \ldots,  n_m]}$.

The \emph{inner product} of two Hermitian tensors
$\cA,\cB\in \C^{[n_1, \ldots,  n_m]}$ is
\[
\langle\cA,\cB \rangle \,:= \, \sum_{i_1,\ldots,i_m,j_1,\ldots,j_m} \cA_{i_1,\ldots,i_m,j_1,\ldots,j_m}\overline{\cB}_{i_1,\ldots,i_m,j_1,\ldots,j_m}.
\]
A Hermitian tensor $\cH$ uniquely determines the conjugate-symmetric polynomial
\[
\cH(z,\overline{z}) \, := \, \langle \cH,  [z_1, \ldots, z_m]_{\otimes_h}\rangle,
\]
with $z = (z_1, \ldots, z_m)$ and $z_i \in \C^{n_i}$.
It is interesting to note that $\cH(z,\overline{z})$ achieves only real values when
$\cH$ is Hermitian \cite{Ni:HermTensor}. This inspires us to define
\emph{positive semidefinite}  (psd) Hermitian tensors.
Let $\F = \C$ or $\R$.

\begin{definition}[\cite{NYHerm20}] \label{def:nng}
A Hermitian tensor $\cH \in \F^{[n_1,\ldots,n_m]}$
is called {\it $\F$-positive semidefinite} (\emph{$\F$-psd})
if $\cH(z,\overline{z}) \geq 0$ for all $z_i \in \F^{n_i}$.
Moreover, if $\cH(z,\overline{z}) > 0$ for all $0 \ne z_i \in \F^{n_i}$,
then $\cH$ is called {\it $\F$-positive definite} ($\F$-pd).
The cone of $\F$-psd Hermitian tensors is denoted as
\begin{equation*} \label{psdcone:CR}
\begin{array}{rcl}
	\mathscr{P}_{\F}^{[n_1,\ldots,n_m]} & := &
	\left\{\cH \in \F^{[n_1,\ldots,n_m]}: \,
	\cH(z,\overline{z}) \geq 0 \, \text{ for all } \, z_i \in \F^{n_i}
	\right \}.
	\end{array}
\end{equation*}
\end{definition}

An important property for Hermitian tensors is their separability.

\begin{definition}[\cite{NYHerm20}]   \label{def:SepHermTensor}

A tensor $\cH \in \F^{[n_1, \ldots, n_m]}$
is called {\it $\F$-separable} if
\begin{equation} \label{Eq:separabletensor}
\cH = [u_1^1, \ldots, u_1^m]_{\otimes h} + \cdots +
 [u_r^1, \ldots, u_r^m]_{\otimes h}
\end{equation}
for some vectors $u_i^j\in \F^{n_j}$.
When such a decomposition exists, (\ref{Eq:separabletensor}) is called a
 {\it positive $\F$-Hermitian decomposition},
which we often abbreviate as \textit{positive decomposition}.
The set of $\F$-separable tensors in
$\F^{[n_1, \ldots, n_m]}$ is denoted as $\sS_{\F}^{[n_1,\ldots,n_m]}$.
\end{definition}

{Equivalently, the tensor $\cH$ is $\F$-separable if
every $\lambda_i$ is nonnegative in the decomposition \eqref{Eq:rank-oneHD}.}
The set of $\F$-separable tensors is in fact a cone.
It is dual to the cone of $\F$-psd Hermtian tensors.
We use the superscript $^\star$ to denote the dual cone.
For a cone $C \subseteq \F^{[n_1,\ldots,n_m]}$,
its dual cone is defined as
\[
C^\star := \{ \mathcal{A} \in \F^{[n_1,\ldots,n_m]}:
\, \langle \mathcal{A}, \mathcal{B} \rangle \ge 0 \, \mbox{ for all } B \in C \}.
\]

\begin{theorem}[\cite{NYHerm20}] \label{thm:sep:dual}
The cone $\sS_{\F}^{[n_1,\ldots,n_m]}$
is dual to $\sP_{\F}^{[n_1,\ldots,n_m]}$, i.e.,
\begin{equation*} \label{psd:dual:sep}
\Big( \sS_{\F}^{[n_1,\ldots,n_m]} \Big)^\star =
\sP_{\F}^{[n_1,\ldots,n_m]}, \quad
\Big( \sP_{\F}^{[n_1,\ldots,n_m]} \Big)^\star =
\sS_{\F}^{[n_1,\ldots,n_m]}.
\end{equation*}
\end{theorem}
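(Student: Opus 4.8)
The plan is to reduce the two claimed identities to one direct verification of definitions plus a single application of the bipolar theorem. First I would record the structural facts that make the duality meaningful. The bilinear pairing $\langle\cdot,\cdot\rangle$ restricts to a genuine real symmetric positive-definite inner product on the real vector space $\F^{[n_1,\ldots,n_m]}$: for Hermitian $\cA,\cB$ one checks $\langle\cA,\cB\rangle=\overline{\langle\cA,\cB\rangle}$ by relabeling the summation indices $I\leftrightarrow J$ and using the Hermitian symmetry $\cA_{IJ}=\overline{\cA_{JI}}$, $\cB_{IJ}=\overline{\cB_{JI}}$, so the value is real; hence each dual cone $C^\star$ lives in the same finite-dimensional real space and the bipolar theorem $(C^\star)^\star=\overline{\operatorname{cone}}(C)$ is available. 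Both $\sS_{\F}^{[n_1,\ldots,n_m]}$ and $\sP_{\F}^{[n_1,\ldots,n_m]}$ are plainly convex cones, and $\sP_{\F}^{[n_1,\ldots,n_m]}$ is closed, being an intersection of the closed half-spaces $\{\cH:\cH(z,\overline z)\ge 0\}$ over all $z_i\in\F^{n_i}$.

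Next I would prove the first identity $\big(\sS_{\F}^{[n_1,\ldots,n_m]}\big)^\star=\sP_{\F}^{[n_1,\ldots,n_m]}$ by two inclusions, each just an unwinding of definitions. For $\subseteq$: if $\cA$ lies in the dual of $\sS_{\F}^{[n_1,\ldots,n_m]}$, then since every rank-$1$ Hermitian tensor $[z_1,\ldots,z_m]_{\oh}$ is itself $\F$-separable (it is a positive decomposition of length $r=1$), the defining inequality of the dual cone gives $0\le\langle\cA,[z_1,\ldots,z_m]_{\oh}\rangle=\cA(z,\overline z)$ for all $z_i\in\F^{n_i}$, i.e. $\cA\in\sP_{\F}^{[n_1,\ldots,n_m]}$. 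For $\supseteq$: if $\cA$ is $\F$-psd and $\cB=\sum_{i=1}^r[u_i^1,\ldots,u_i^m]_{\oh}\in\sS_{\F}^{[n_1,\ldots,n_m]}$, then by additivity of the inner product in its second argument $\langle\cA,\cB\rangle=\sum_{i=1}^r\langle\cA,[u_i^1,\ldots,u_i^m]_{\oh}\rangle=\sum_{i=1}^r\cA(u_i,\overline{u_i})\ge 0$, so $\cA$ belongs to the dual cone. This proves the first equality.

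Finally, I would deduce the second identity by dualizing the first: $\big(\sP_{\F}^{[n_1,\ldots,n_m]}\big)^\star=\Big(\big(\sS_{\F}^{[n_1,\ldots,n_m]}\big)^\star\Big)^\star=\overline{\operatorname{cone}}\big(\sS_{\F}^{[n_1,\ldots,n_m]}\big)$, and since $\sS_{\F}^{[n_1,\ldots,n_m]}$ is already a convex cone it remains only to see that it is closed, whence the desired equality. I expect the closedness of the separability cone to be the one genuine obstacle. My plan there is a Carathéodory-style normalization: because $[cv_1,v_2,\ldots,v_m]_{\oh}=|c|^2[v_1,\ldots,v_m]_{\oh}$, every summand can be rewritten as $\lambda_i\,[\hat u_i^1,\ldots,\hat u_i^m]_{\oh}$ with unit vectors $\hat u_i^j$ and $\lambda_i\ge 0$, so $\sS_{\F}^{[n_1,\ldots,n_m]}$ is the convex conical hull of the compact set $K=\{[\hat u^1,\ldots,\hat u^m]_{\oh}:\|\hat u^j\|=1\}$. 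Carathéodory's theorem in the real space $\F^{[n_1,\ldots,n_m]}$ caps the number of needed summands at $N=\dim_{\R}\F^{[n_1,\ldots,n_m]}$, and since every element of $K$ is a nonzero psd Hermitian tensor, a strictly $\F$-positive-definite test tensor separates $0$ from $K$ with a uniform margin, so $0\notin\operatorname{conv}(K)$ and the conical hull of a compact set with this property is closed. Alternatively, one may simply invoke the closedness established in \cite{NYHerm20}; the remainder of the argument is routine once the definitions of $\sP_{\F}^{[n_1,\ldots,n_m]}$, $\sS_{\F}^{[n_1,\ldots,n_m]}$, and $\cH(z,\overline z)$ are in hand.
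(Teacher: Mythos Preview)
Your argument is correct and follows the standard route for establishing such conic dualities: verify $(\sS_{\F})^\star=\sP_{\F}$ directly from the definitions, then invoke the bipolar theorem together with closedness of $\sS_{\F}$ to obtain $(\sP_{\F})^\star=\sS_{\F}$. The closedness argument via Carath\'eodory and the conical hull of the compact set $K=\{[\hat u^1,\ldots,\hat u^m]_{\oh}:\|\hat u^j\|=1\}$ with $0\notin\operatorname{conv}(K)$ is sound; the existence of a strictly positive-definite Hermitian tensor (e.g., the one corresponding to $(z_1^*z_1)\cdots(z_m^*z_m)$) supplies the separating functional you need.

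As for comparison: the present paper does not actually prove this theorem. It is stated with the citation \cite{NYHerm20} and no proof is given here; the result is imported from that reference as background. So there is no ``paper's own proof'' to compare against in this manuscript. Your write-up would serve perfectly well as a self-contained proof, and indeed your final sentence already anticipates that one may simply cite \cite{NYHerm20} for the closedness step, which is exactly what the authors do for the entire statement.
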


This paper mostly discusses the case $\F = \C$.
For convenience, $\C$-psd (resp., $\C$-separable)
Hermitian tensors are just simply called psd (resp., separable),
unless the real field $\F=\R$ is considered.

Separable Hermitian tensors can be equivalently expressed by using moments.
If $\cH \in \C^{[n_1,\ldots,n_m]}$ is separable, then it can be written as
\begin{align}\label{Eq:sep:sum}
\cH = {\sum}_{i=1}^r \lmd_i [u_i^1, \ldots, u_i^m]_{\otimes h}
\end{align}
with all $\|u_i^j\|=1,\lambda_i > 0 $.
(Here $\| \cdot \|$ denotes the standard Euclidean norm.)
Let $\mu = \sum_{i=1}^r\lmd_i \delta_{(u_i^1, \ldots, u_i^m)} $
be the weighted sum of Dirac measures,
then \eqref{Eq:sep:sum} is equivalent to that
\begin{align}\label{Eq:sep:int}
    \cH  = \int [z_1,\ldots,z_m]_\oh \mathtt{d}\mu.
\end{align}
The measure $\mu$ is supported in the multi-sphere
\[
\bS_\C^{n_1,\ldots,n_m} \, := \, \{(z_1,\ldots,z_m)\in
\C^{n_1}\times \cdots \times \C^{n_m} \colon \|z_1\|=\cdots=\|z_m\|=1\}.
\]
Conversely, if there is a Borel measure $\mu$ satisfying \eqref{Eq:sep:int},
then $\cH$ must be separable. We have the following result.

\begin{theorem}[\cite{NYHerm20}] \label{thm:Sep:ExistenceMeasure}
A tensor $\cH\in \C^{[n_1, \ldots, n_m]}$ is separable
if and only if there exists a Borel measure $\mu$
such that \eqref{Eq:sep:int} holds and its support
$\supp(\mu)\subseteq \bS_\C^{n_1,\ldots,n_m}$.
\end{theorem}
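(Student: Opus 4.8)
The plan is to treat the two directions separately; the ``only if'' direction is essentially contained in the discussion preceding the statement, so I would only spell it out briefly. Given a separable $\cH$, start from a positive Hermitian decomposition, move the sign of each coefficient onto the vectors and split off its magnitude so that $\cH = \sum_{i=1}^r \lmd_i\,[u_i^1,\ldots,u_i^m]_\oh$ with all $\|u_i^j\|=1$ and all $\lmd_i>0$; then the finitely supported (hence Borel) measure $\mu := \sum_{i=1}^r \lmd_i\,\delta_{(u_i^1,\ldots,u_i^m)}$ is supported on $\bS_\C^{n_1,\ldots,n_m}$ and satisfies \eqref{Eq:sep:int} by multilinearity of $[\,\cdot\,]_\oh$. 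The content of the theorem is the converse: I must extract a \emph{finite} positive Hermitian decomposition from an integral representation.

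So suppose $\cH = \int [z_1,\ldots,z_m]_\oh\,\mathtt{d}\mu$ with $\supp(\mu)\subseteq\bS_\C^{n_1,\ldots,n_m}$. Such a $\mu$ is automatically finite, since pairing $\cH$ with a positive-definite Hermitian tensor $\cQ$ gives $\langle\cH,\cQ\rangle = \int\cQ(z,\overline z)\,\mathtt{d}\mu \ge \big(\min_{\bS_\C^{n_1,\ldots,n_m}}\cQ(z,\overline z)\big)\,\mu(\bS_\C^{n_1,\ldots,n_m})$ with the minimum strictly positive by compactness. Introduce the generator set
\[
G\ :=\ \bigl\{\,[z_1,\ldots,z_m]_\oh\ :\ (z_1,\ldots,z_m)\in\bS_\C^{n_1,\ldots,n_m}\,\bigr\}\ \subseteq\ \C^{[n_1,\ldots,n_m]}.
\]
As the continuous image of the compact multi-sphere, $G$ is compact; also $\|[z_1,\ldots,z_m]_\oh\| = \prod_{j=1}^m\|z_j\|^2 = 1$ on the multi-sphere, and no convex combination of elements of $G$ can vanish (pair again with a positive-definite $\cQ$), so $0\notin\mathrm{conv}(G)$. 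By the standard fact that the convex conic hull of a compact set whose convex hull avoids the origin is closed, $\mathrm{cone}(G)$ is a closed convex cone in the finite-dimensional space $\C^{[n_1,\ldots,n_m]}$; and since $c\,[z_1,\ldots,z_m]_\oh = [\sqrt{c}\,z_1,z_2,\ldots,z_m]_\oh$ for $c>0$ and norms may be split off the vectors, one checks $\mathrm{cone}(G) = \sS_\C^{[n_1,\ldots,n_m]}$.

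It then remains to see that $\cH$ itself lies in $\mathrm{cone}(G)$ and not merely in its closure. This holds because the vector-valued integral $\int[z]_\oh\,\mathtt{d}\mu$ is a limit of integrals of $\mu$-simple functions --- equivalently, finitely supported measures are weak-$\ast$ dense among finite measures on the compact set $\bS_\C^{n_1,\ldots,n_m}$ and $\nu\mapsto\int[z]_\oh\,\mathtt{d}\nu$ is weak-$\ast$ continuous --- so $\cH$ is a limit of points of $\mathrm{cone}(G)$; since $\mathrm{cone}(G)$ is closed, $\cH\in\mathrm{cone}(G)$, i.e.\ $\cH = \sum_{i=1}^r c_i\,[z_1^{(i)},\ldots,z_m^{(i)}]_\oh$ for finitely many $c_i>0$ (indeed $r\le n_1^2\cdots n_m^2$ by Carath\'eodory's theorem for cones). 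Absorbing $\sqrt{c_i}$ into the first vector of the $i$-th term gives a positive Hermitian decomposition, so $\cH$ is separable. Alternatively, one can shortcut the whole argument by invoking \Cref{thm:sep:dual}: for every psd $\cQ$, $\langle\cH,\cQ\rangle = \int\langle[z]_\oh,\cQ\rangle\,\mathtt{d}\mu = \int\cQ(z,\overline z)\,\mathtt{d}\mu\ge0$ (using $\langle[z]_\oh,\cQ\rangle = \overline{\cQ(z,\overline z)} = \cQ(z,\overline z)$), so $\cH\in\bigl(\sP_\C^{[n_1,\ldots,n_m]}\bigr)^\star = \sS_\C^{[n_1,\ldots,n_m]}$.

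I expect the only genuine obstacle to be exactly this passage from the ``continuous'' integral representation to a finite sum of rank-$1$ psd Hermitian tensors: concretely, the closedness of $\mathrm{cone}(G)$ (equivalently, of the separable cone), which rests on the compactness of the multi-sphere together with $0\notin\mathrm{conv}(G)$. Everything else --- normalizing the vectors, splitting off magnitudes and signs, and interchanging a fixed linear functional with the integral --- is routine.
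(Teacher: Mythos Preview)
The paper does not give its own proof of this theorem: it is imported verbatim from \cite{NYHerm20} and stated without argument, so there is no ``paper's proof'' to compare against here. Your proposal is correct and would serve as a self-contained proof.

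A couple of minor remarks. Your duality shortcut via \Cref{thm:sep:dual} is the cleanest route, but note that \Cref{thm:sep:dual} is likewise imported from \cite{NYHerm20}, so if the goal is an argument independent of that reference you should stick with your first approach. In that first approach, the finiteness of $\mu$ is even more immediate than you indicate: summing the diagonal entries gives
\[
\sum_{i_1,\ldots,i_m}\cH_{(i_1,\ldots,i_m)(i_1,\ldots,i_m)}
=\int \prod_{j=1}^m\|z_j\|^2\,\mathtt{d}\mu
=\mu\bigl(\bS_\C^{n_1,\ldots,n_m}\bigr),
\]
so no auxiliary positive-definite $\cQ$ is needed. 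Finally, your claim that $\mathrm{cone}(G)=\sS_\C^{[n_1,\ldots,n_m]}$ and the closedness of this cone are exactly the content of the standard fact that the separable cone is closed (compact base, origin excluded from the convex hull); this is indeed the only nontrivial ingredient, and you have identified it correctly.
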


Detecting separability of a Hermitian tensor is equivalent to checking
the existence of a Borel measure $\mu$ satisfying \eqref{Eq:sep:int}.
This is a truncated moment problem. We will discuss this
with more details in Section~\ref{Sec:Decomposition}.

Hermitian tensors can be naturally flattened into Hermitian matrices \cite{NYHerm20}.
Let $\mathfrak{m} \colon \C^{[n_1, \ldots, n_m]} \to \bH^M$
be the linear map such that
\begin{align}\label{Eq:HermFlatteningMap}
\mathfrak{m}([u_1,\ldots,u_m]_\oh)
\, = \, \big(u_1 u_1^*\big) \boxtimes \cdots \boxtimes \big(u_m u_{m}^*\big).
\end{align}
In the above, $M = n_1 \cdots n_m$, the notation $\bH^M$ denotes the set of all
$M\times M$ Hermitian matrices, the symbol $\boxtimes$
stands for the classical Kronecker product\footnote{
It is more convenient to use the different symbol $\boxtimes$
to distinguish the Kronecker product from the tensor product.
This is because the Kronecker product of two vectors/matrices
has the same order, while the tensor product
of two tensors has a higher order. For instance, it may cause confusion
for defining the matrix flattening $\mathfrak{m}(\cH)$
and the tensor $\mathbb{T}(\cH)$ (see Section \ref{Sec:C-psd-decomp})
if the Kronecker product and tensor product
are denoted by the same notation.
}, and the superscript $^*$
denotes the conjugate transpose.
The matrix $\mathfrak{m}(\cH)$ is called the
\textit{Hermitian flattening matrix} of $\cH$.
Note that $\mathfrak{m}$ gives a bijection between
$\C^{[n_1, \ldots, n_m]}$ and $\bH^M$.
Therefore, a Hermitian tensor can be displayed
by showing its Hermitian flattening matrix.

The separability of $\cH$ can also be equivalently expressed
in terms of the flattening matrix $\mathfrak{m}(\cH)$.
The positive decomposition \eqref{Eq:separabletensor}
is equivalent to
\begin{equation} \label{krondc:m(A)}
\mathfrak{m}(\cH) \,= \, \sum_{i=1}^r
\big(u_i^1 (u_i^1)^*\big) \boxtimes \cdots \boxtimes \big(u_i^m (u_i^{m})^*\big).
\end{equation}
If there exist Hermitian psd matrices
$B_{j}^i \in  \F^{n_j \times n_j}$ such that
\begin{equation}   \label{sepa:A=sum:Bij:ot}
\mathfrak{m}(\cH) \, = \, \sum_{i=1}^s
B_{1}^i \boxtimes \cdots \boxtimes B_{m}^i,
\end{equation}
then $\cH$ must be $\F$-separable.
Similarly, we have $\cH\in \sS_{\F}^{[n_1,\ldots,n_m]}$
if and only if $\cH$ has a decomposition like in \eqref{sepa:A=sum:Bij:ot}.
This observation leads to the following definition
introduced in \cite{NYHerm20}.

\begin{definition}[\cite{NYHerm20}] \label{def:C-psd:dec}
For $\cH\in \sS_{\F}^{[n_1,\ldots,n_m]}$, the \textit{$\F$-psd rank}
of $\cH$, for which we denote $\psdrank_{\F}(\cH)$, is the smallest $s$ 
such that \eqref{sepa:A=sum:Bij:ot} holds
for Hermitian psd matrices $B_{j}^i \in \F^{n_j \times n_j}$.
The equation \eqref{sepa:A=sum:Bij:ot} is called a
\textit{$\F$-psd decomposition} of $\cH$.
\end{definition}

We would like to remark that our notion of $\F$-psd rank is different from
the notion of psd-rank for matrices
that was introduced in \cite{BLP17,deLLG17,LauPio15}.

Hermitian tensors have broad applications in quantum physics 
(see \cite{Doherty:Parrilo:Spedalieri, Gurvits:NP-Hard,LiNiSepa20,Ni:HermTensor}).
It was shown by Gurvits~\cite{Gurvits:NP-Hard} that
the computational complexity of detecting separability is NP-hard.
Therefore, it is a fundamental and attractive task to search for
certificates for separability,
which yield necessary and/or sufficient conditions.
There exist several criteria in the literature 
\cite{Bell,Chen:Albeverio:Fei,Donald:Horodecki:Rudolph,
Guehne:Hyllus:Gittsovich:Eisert,Peres, Horodecki3,Rudolph}.
Despite the range of different criteria in the literature,
most of them give only necessary conditions.
Doherty et al.~\cite{Doherty:Parrilo:Spedalieri}
addressed how to identify entangled states.
Nie and Zhang \cite{NieZhang16} proposed a semidefinite algorithm to
detect separability of real symmetric matrices.
Li and Ni \cite{LiNiSepa20} discussed detecting separability
of general complex Hermitian tensors. They formulated the question
as a truncated moment problem and solved it by Lasserre type moment relaxations.
Hermitian tensor separability is closely related
to the quantum separability problem. We refer to the work \cite{Derksen:Friedland:Lim:Wang:Entanglement,Li:Nakatsukasa:Soma:Uschmajew:Tensors, Ni:Qi:Bai:MeasureOfEntanglement-and-Eigenvalues,
Ni:HermTensor,Qi:Zhang:Ni:HowEntangled}.

\subsection{Contributions}

This article has two major contributions.
We give methods to detect separability of Hermitian tensors
and to compute psd decompositions.

First, we study how to detect separability of a Hermitian tensor.
We formulate this question as a moment optimization problem~\eqref{program:sep-momcone}.
It is an improved version of the one given in \cite{LiNiSepa20}.
Namely, in our new formulation
we choose the leading entry of each decomposing vector to be real nonnegative,
which reduces the number of indeterminate variables of polynomials
and allows us to get tighter relaxations and to solve larger sized problems.
Moreover, our new formulation makes the flat truncation hold for lower order relaxations,
which is the key to obtain a positive decomposition.
The difference to the traditional approach is explained with more details
at the end of Section~\ref{Subsec:MomentProblem}.
We then propose the hierarchy of semidefinite relaxations \eqref{program:sep-SDP}
to solve \eqref{program:sep-momcone}.
Consequently, Algorithm~\ref{Alg:membership-in-sep-cone} is given
to detect separable Hermitian tensors.
In view of quantum entanglements,
this is equivalent to checking separability for quantum states.
The nonseparability can always be detected within finitely many loops
by Algorithm~\ref{Alg:membership-in-sep-cone}.
For separable Hermitian tensors, we show that
the hierarchy of relaxations~\eqref{program:sep-SDP}
%%Algorithm~\ref{Alg:membership-in-sep-cone}
gives a sequence whose accumulation points are optimizers
of the moment optimization \eqref{program:sep-momcone}
(see Theorem~\ref{Thm:AsymptoticConv}).
Furthermore, we prove that
the hierarchy of relaxations~\eqref{program:sep-SDP}
has finite convergence
under certain conditions (see Theorem~\ref{Thm:FiniteConv}).
When the rank condition \eqref{rank:FT:sepaH}
is satisfied for some relaxation order, Algorithm~\ref{Alg:membership-in-sep-cone}
terminates in that loop and produces a positive decomposition.

Second, we study \emph{psd decompositions} of separable Hermitian tensors,
which are expressed as Kronecker products of psd matrices;
see the Definition~\ref{def:C-psd:dec}.
It is mostly an open question to compute
psd decompositions and psd ranks for separable Hermitian tensors
(see~\cite[Problem 7.3]{NYHerm20}).
We make a first step towards solving this question,
when the psd rank is low.
The psd decomposition of a Hermitian tensor $\cH$
is equivalent to the decomposition of a specific non-symmetric tensor $\bT(\cH)$.
If the rank decomposition of $\bT(\cH)$ is in the form of a psd decomposition,
it certifies the separability and also determines the psd rank.
This is stated in {Lemma~\ref{prop: TH}}.
A natural question is how to get a psd decomposition of $\cH$
from the rank decomposition of $\bT(\cH) $.
For the case $m \ge 3$, we show that this is possible
if the psd rank is small;
see Theorems~\ref{Thm:uniqueCpsdDecomposition-smallS} and \ref{prop:r>n2}.
For the case $m=2$, a similar method (see {Theorem}~\ref{prop:m=2})
can be given to get psd decompositions.
This paper is generally not able to determine psd ranks when they are high.
A major advantage of our method for computing psd decompositions
is that it works efficiently for large sized Hermitian tensors
with low psd ranks.
The proposed methods are much faster than in the earlier method
based on solving moment optimization.

The paper is structured as follows.
Section~\ref{Sec:Prelim} introduces the terminology and
reviews basic concepts from polynomial optimization and truncated moment problems.
In Section~\ref{Sec:Decomposition} we discuss how to detect separable Hermitian tensors.
Section~\ref{Sec:C-psd-decomp} studies how to compute psd decompositions
{and certify separability of Hermitian tensors with low psd ranks}.
We close with a discussion of our results and open questions
in Section~\ref{Sec:Conclusion}.

\section{Preliminaries}
\label{Sec:Prelim}

This section introduces the notation and some basics in
polynomial optimization. For more details, we refer to
\cite{Las10,Las15,Laurent:Survey}.

\subsection{Notation}

Throughout the article, we use $\N=\{0,1,2,\ldots\}$
for the set of nonnegative integers, and $\R$ and $\C$
for the field of real numbers and complex numbers, respectively.
For a positive integer, denote $[n] :=\{1,\ldots,n\}$.
For $k=1,\ldots, m$, let $z_k$ be the complex vector variable in $\C^{n_k}$.
The tuple of all such complex variables is denoted by $z=(z_1,\ldots,z_m)$.
Given a real symmetric matrix $M$, the notation $M\; \succeq\; 0$
means that $M$ is positive semidefinite (psd).
The $\bH^{n}$ denotes the set of $n$-by-$n$ complex Hermitian matrices,
and $\bH_+^{n}$ denotes the cone of psd matrices in $\bH^{n}$.
For a matrix $A$, $\text{Row}(A)$ indicates its row space and
$\text{vec}(A)$ denotes its vectorization. 
For a real number $t$, $\lceil t\rceil$ 
denotes the smallest integer that is greater than or equal to $t$. 
Given a real or complex vector $u$, we denote its standard Euclidean norm by $||u||$. 
For a matrix or a vector $a$, we use the notation $a^*$ to denote its conjugate transpose, 
$a^T$ to denote its transpose, while $\overline{a}$ is used for its entry-wise complex conjugate. 
Moreover, we use $a^{\re}$ and $a^{\im}$ its real and imaginary part, respectively. 
The symbol $\otimes$ denotes the tensor product, whereas $\boxtimes$
is used for the classical Kronecker product.
A property for a vector space $S$ is said to hold generically
if it holds everywhere except on a subset of Lebesgue measure zero.

\subsection{Real Algebraic Geometry}

Let $\R[x] := \R[x_1,\ldots,x_n]$ be the ring of real $n$-variate polynomials. 
The set of all $n$-variate polynomials of degree 
less than or equal to $2d$ is written as $\R[x]_{n,2d}$.
If the number of variables is clear from the context, 
we may drop the subscript $n$ and write $\R[x]_{2d}$.  
The degree of a polynomial $p$ is referred to as $\deg(p)$. 
For $\alp=(\alp_1,\ldots,\alp_n)\in \N^n$ denote 
$x^{\alp} = x_1^{\alp_1} \cdots x_n^{\alp_n}$ and $|\alp|=\sum_{i=1}^{n} \alp_i$. 
For a degree $d>0$, we let
\[
\N^n_d=\{\alp \in \N: 0\leq |\alp|\leq d\}
\]
be the set of monomial powers, and $[x]_d$ 
be the vector of all monomials of degrees at most $d$,
ordered in the graded lexicographic ordering, i.e.,
\[
[x]_d=(1,x_1,\ldots,x_n,x_1^2,x_1x_2,\ldots,x_n^d)^T.
\]

For a tuple of polynomials $h = (h_1,\ldots,h_s)\in \R[x]$, the set
\[I(h)=h_1 \cdot \R[x]+\cdots+h_s \cdot \R[x]\]
is the \emph{ideal} generated by $h$. The $k$-th truncation of $I(h)$ 
is the finite dimensional subspace
\[ I_k(h)=h_1\cdot  \R[x]_{k-\deg(h_1)}+\cdots+h_s \cdot \R[x]_{k-\deg(h_s)}.\]

A polynomial $\sig\in \R[x]$ is said to be a 
\emph{sum of squares (SOS)} if $\sig=p_1^2+\cdots+p_t^2$ 
for some real polynomials $p_1,\ldots,p_t$. We use $\Sig[x]$ 
to denote the set of all SOS polynomials in $x$, and $\Sig[x]_{n,d}$ 
to denote the truncation $\Sig[x]\cap \R[x]_{n,d}$. 
Again, we may drop the subscript $n$. The notation $\inter(\Sig[x]_d)$ 
refers to the interior of $\Sig[x]_d$.
Sums of squares can be represented via \emph{semidefinite programming} (SDP);
see \cite{Las10}.
For a tuple $g = (g_1, \ldots, g_t)$, $g_i\in \R[x]$, 
the \emph{quadratic module} generated by $g$ is the set
\[Q(g)=\Sig[x]+ g_1 \cdot \Sig[x]+\cdots+g_t \cdot \Sig[x].\]
Similarly, the $k$-th truncation of $Q(g)$ is
\[
Q_k(g)=\Sig[x]_{2k}+ g_1 \cdot \Sig[x]_{2k-\deg(g_1)}+\cdots+g_t \cdot \Sig[x]_{2k-\deg(g_t)}.
\]

The tuples $h$ and $g$ as above determine the semialgebraic set
\begin{align}\label{Eq:semialgebraic-set-general}
K=\{x\in \R^n: h_1(x)=0,\ldots,h_s(x)=0,\;g_1(x)\geq 0,\ldots,g_t(x)\geq 0\}.
\end{align}
Obviously, if $p\in I(h)+Q(g)$, then $p\geq 0$ on $K$. 
The converse is true under a slightly stronger condition, 
see \cite{Putinar:Positivstellensatz,Nie:OptimalityConditions}.

\subsection{Truncated Moment Problems}
For a given dimension $n$ and degree $d$ let $\R^{\N^n_d}$ be the space of real vectors
that are indexed by $\alp\in \N^n_d$, i.e., $\R^{\N^n_d}=\{y=(y_{\alp})_{\alp\in \N^n_d}: y_{\alp}\in \R\}$. 
It is the space dual to $\R[x]_{n,d}$. A vector in $\R^{\N^n_d}$ 
is called a \emph{truncated multisequence} (tms) of degree $d$. 
We use the notation $y|_d$ to denote the subvector of $y$ whose indices are in $\N^n_d$. 
Each tms gives rise to a linear form acting on $\R[x]_{n,d}$, 
hence for $p=\sum_{\alp \in\N^n_d} p_{\alp} x^{\alp} \in \R[x]_{n,d}$ 
and $y\in \R^{\N^n_d}$ we  define the scalar product
\[\langle p,y\rangle \,= \, \sum_{\alp \in\N^n_d} p_{\alp}y_{\alp}. \]

A Borel measure $\mu$ supported on a set $K\subseteq \R^n$, i.e., 
$\supp(\mu)\subseteq K$, is called a \emph{$K$-measure}. 
If, for a tms $y$, there exists a $K$-measure, such that 
$y_{\alp}=\int x^{\alp}\, \mathtt{d}\mu$ 
for all $\alp \in \N^n_d$, we say $y$ \emph{admits} a $K$-measure $\mu$, 
and we call such a $\mu$ a \emph{$K$-representing} measure for $y$.

Let $p\in \R[x]_{n,2d}$, then the $d$-th \emph{localizing matrix} of $p$, 
generated by a tms $y\in \R^{\N^n_{2d}}$, is the symmetric matrix $L_p^{(d)}(y)$ satisfying
\[\langle pf_1f_2,y \rangle= \Vect({f_1})^T(L_p^{(d)}(y))\Vect({f_2}),\]
for all $f_1,f_2\in \R[x]_{n,d-\lceil \deg(p)/2\rceil}$. 
For a given $p$, $L_p^{(d)}(y)$ is linear in $y$. 
Clearly, if $p(x)\geq 0$ and $y=[x]_{2d}$, 
then $L_p^{(d)}(y)=p(x)[x]_{d-\lceil \deg(p)/2\rceil}[x]_{d-\lceil \deg(p)/2\rceil}^T\succeq 0$.
In the special case, that $p$ is the constant one polynomial $p=1$, 
the localizing matrix $L_1^{(d)}(y)$ reduces to a \emph{moment matrix}, which we denote by
\[M_d(y) \, := \, L_1^{(d)}(y).\]

Let $K$ be as in \eqref{Eq:semialgebraic-set-general}.
A necessary condition for $y$ to admit $K$-measure is
\begin{align}\label{Eq:necessaryCondition-y}
 L_{h_i}^{(d)}(y)=0 \, (1 \le i \le s), \quad
 L_{g_j}^{(d)}(y)\succeq 0 \, (1 \le j \le t), \quad
 M_d(y)\succeq 0.
\end{align}
For convenience, define $L_h^{(d)}(y)$ to be the block diagonal matrix with diagonal blocks 
$L_{h_1}^{(d)}(y), \ldots, L_{h_s}^{(d)}(y)$, 
and similarly $L_g^{(d)}(y)$ denotes the diagonal matrix with blocks $L_{g_j}^{(d)}(y)$. 
Let $t$ be the integer given by $t=\max\{1,\lceil\deg(h)/2\rceil, \lceil\deg(g)/2\rceil\}$.
If $y$, in addition to \eqref{Eq:necessaryCondition-y}, also satisfies the rank condition
\begin{align}\label{Eq:rk-Condition}
\rank M_{d-t}(y)=\rank M_d(y),
\end{align}
then $y$ admits a unique representing $K$-measure and $\mu$ is supported on 
$\rank  M_d(y)$ many distinct points in $K$.  If both \eqref{Eq:necessaryCondition-y} 
and \eqref{Eq:rk-Condition} are satisfied, 
in the literature $y$ is often called \emph{flat with respect to} $h=0$ and $g\geq 0$.

For a tms $w\in \R^{\N^n_k}$ and a degree $d \le k$, the notation
$w\vert_d$ denotes the subvector consisting of entries
$(w)_{\alpha}$ with $|\alpha| \le d$.
For two tms $y \in\R^{\N^n_d}$ and $w\in \R^{\N^n_k}$ with $k>d$,
if $w|_d=y$, we say that $w$ is an \emph{extension} of $y$, or equivalently,
$y$ is a \emph{truncation} of $w$. Clearly, if $y$ is flat and $w|_d=y$,
then $y$ admits a $K$-measure. In such case, we call $w$ a \emph{flat extension}
of $y$ and $y$ a \emph{flat truncation} of $w$.
We refer to \cite{CurFia96,Curto:Fialkow,Lau05}
for the classical flat extension theorem.
Flat extensions and truncations are very useful in
truncated moment problems and optimization
\cite{FiaNie12,Helton:Nie:TMP,Las15,Laurent:Survey,Nie:ATKMP,nie2015linear}.
They are also useful in tensor decompositions~\cite{niegp}.

\section{Detecting separability of Hermitian tensors}
\label{Sec:Decomposition}

The separability of a Hermitian tensor can be detected
by solving a moment optimization problem,
which then can be solved by Lasserre type semidefinite relaxations.
This is done by Li and Ni \cite{LiNiSepa20},
based on the results in \cite{Nie:ATKMP,nie2015linear}.
In this section, we review this method and provide an improved
formulation of the moment optimization.
Furthermore, we prove stronger convergence results.

\subsection{Moment optimization formulation}
\label{Subsec:MomentProblem}

Recall that a Hermitian tensor $\cH \in \C^{[n_1,\ldots,n_m]}$
is separable if and only if there exist vectors $u_i^j \in \C^{n_j}$ such that
\[
\cH \, = \,  [u_1^1, \ldots, u_1^m]_{\otimes h}
+ \cdots + [u_r^1, \ldots, u_r^m]_{\otimes h} .
\]
A complex vector can be written as a sum of its real and imaginary parts.
For $u^j := ((u^j)_1,\ldots,(u^j)_{n_j}) \in \C^{n_j}$, one can write that
\[
u^j \,=\, x^{\re}_j+\sqrt{-1}x^{\im}_j, \quad
x^{\re}_j \in \R^{n_j}, \quad
x^{\im}_j \in  \R^{n_j} .
\]
The coordinates of $x^{\re}_j, x^{\im}_j$ can be labelled as
\[
x^{\re}_j = \big( (x_j^{\re})_1,\ldots,(x_j^{\re})_{n_j} \big), \quad
x^{\im}_j = \big( (x_j^{\im})_{1},\ldots,(x_j^{\im})_{n_j} \big).
\]
It is interesting to note that, for all unitary scalars $\tau_i^j$
(i.e., $|\tau_i^j| = 1$), the above decomposition for $\cH$ is the same as
\begin{equation*}\label{cH=sum:[uji]:tauij}
\cH \, = \, [\tau_1^1 u_1^1, \ldots, \tau_1^m u_1^m]_{\otimes h}
+ \cdots +  [\tau_r^1 u_r^1, \ldots, \tau_r^m u_r^m]_{\otimes h} .
\end{equation*}
For each $u_i^j$, there exists a unitary scalar $\tau_i^j$
such that the first entry of $\tau_i^j u_i^j$ is real and nonnegative, i.e.,
$(x_j^{\re})_1 \ge 0$, $(x_j^{\im})_1=0$.
By Theorem~\ref{thm:Sep:ExistenceMeasure},
a Hermitian tensor $\cH \in \C^{[n_1,\ldots,n_m]}$ is separable if and only if
\[
    \cH  = \int z_1 \otimes \cdots \otimes z_m \otimes
\overline{z}_1 \otimes \cdots \otimes \overline{z}_m \mathtt{d} \mu,
\]
for a Borel measure $\mu$ supported in the multi-sphere $\bS_\C^{n_1,\ldots,n_m}$.
In view of the above observation, such a measure $\mu$
can be further chosen to be supported in the set
\begin{align*} \label{set:sC:real>=0}
\bS_{\C,+}^{n_1,\ldots,n_m}
 :=\left\{(u^1,\ldots,u^m)
%\left|
: \begin{array}{l}
 u^j \in \C^{n_j}, \, \|u^j\| = 1, \, (x_j^{\re})_1 \ge 0, (x_j^{\im})_1=0
 \end{array} %\right.
 \right\}.
\end{align*}
For convenience of notation, for each $j=1,\ldots,m$, we denote that
\[
x_j := (x_j^{\re}, x_j^{\im})=\big( (x_j^{\re})_1,\ldots,(x_j^{\re})_{n_j}, \,
 (x_j^{\im})_{2},\ldots,(x_j^{\im})_{n_j} \big) \in \R^{2n_j-1}.
\]
For neatness of labelling, we also write that
\[
x_j := \big( (x_j)_1,\ldots,(x_j)_{n_j}, \,
 (x_j)_{n_j+1},\ldots,(x_j)_{2n_j-1} \big) .
\]
Then $\bS_{\C,+}^{n_1,\ldots,n_m}$ can be equivalently written
as the semialgebraic set
\begin{align}
\label{Eq:Def:K}
K \, :=\, \left\{(x_1,\ldots,x_m)
:\begin{array}{l}
x_j \in \R^{2n_j-1}, \, \| x_j \| = 1,  \, (x_j)_1 \ge 0.
 \end{array}
 \right\}.
\end{align}
Let $\sB(K)$ denote the set of all Borel measures supported in $K$ and
\begin{align}
\label{Eq:Def:x}
x \,:= \, (x_1,\ldots,x_m)\in \R^{2N-m},
\end{align}
with $N :=\sum_{j=1}^{m} n_j$. The set $K$ can be equivalently given as
\begin{align*}
K=\{x\in \R^{2N-m}: h(x)=0, g(x)\geq 0 \},
\end{align*}
where $h := \big(\|x_1\|^2-1,\ldots,\|x_m\|^2-1 \big)$
and  $g(x):=\big( (x_1)_1,\ldots, (x_m)_1 \big)$.

Next, we consider the label set
\begin{align} \label{labelSET:S}
S \,:= \, \{(i_1,\ldots,i_m): i_1\in [n_1],\ldots,i_m\in [n_m]\}.
\end{align}
Its cardinality is $M=n_1\cdots n_m$. For two labeling tuples in $S$
\[
I:=(i_1,\ldots,i_m), \quad J:=(j_1,\ldots,j_m),
\]
we define the ordering
$I < J$ if the first nonzero entry of $I-J$ is negative.
{We remark that any ordering on multi-indices
works here. We just choose the  one above for convenience.}
For $I<J$, let $P_{IJ}$ denote the polynomial
\begin{align}
\label{Eq:P_IJ}
P_{IJ}(x) \,:=\, \prod_{s=1}^{m}
\big(x_s^{\re}+\sqrt{-1}x_s^{\im}\big)_{i_s} \cdot
\big(x_s^{\re}-\sqrt{-1}x_s^{\im}\big)_{j_s}.
\end{align}
Therefore, the positive Hermitian decomposition~\eqref{Eq:sep:int}
is equivalent to that
\begin{align}\label{Eq:sep:int:poly}
\cH_{IJ} = \int_{K} P_{IJ}(x) \mathtt{d}\mu,
\quad \text{ for all } I,J\in S,
\end{align}
for a Borel measure $\mu$ supported in $K$. Then
Theorem~\ref{thm:Sep:ExistenceMeasure} implies the following.

\begin{corollary}\label{cor:sep-meas}
A tensor $\cH\in \C^{[n_1, \ldots, n_m]}$ is separable if and only if 
there exists a measure $\mu\in \sB(K)$ such that 
\eqref{Eq:sep:int:poly} is satisfied.
\end{corollary}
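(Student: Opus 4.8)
The plan is to deduce \Cref{cor:sep-meas} directly from \Cref{thm:Sep:ExistenceMeasure} by tracking the coordinate changes that were set up in the paragraphs above the statement. The claim is essentially a bookkeeping translation: the abstract integral representation \eqref{Eq:sep:int} over the complex multi-sphere $\bS_\C^{n_1,\ldots,n_m}$ becomes, after writing complex vectors in real coordinates and normalizing the leading entry of each $u^j$ by a unitary phase, a system of real polynomial moment equations \eqref{Eq:sep:int:poly} against a measure supported in the real semialgebraic set $K$ of \eqref{Eq:Def:K}. So the proof has two directions, both short.

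For the forward direction, suppose $\cH$ is separable. By \Cref{thm:Sep:ExistenceMeasure} there is a Borel measure $\nu$ with $\supp(\nu)\subseteq\bS_\C^{n_1,\ldots,n_m}$ and $\cH=\int [z_1,\ldots,z_m]_\oh\,\mathtt{d}\nu$. First I would use the phase-invariance observed above: for any point $(u^1,\dots,u^m)$ in the multi-sphere and any unit scalars $\tau^j$, the rank-one tensor $[u^1,\dots,u^m]_\oh$ equals $[\tau^1u^1,\dots,\tau^m u^m]_\oh$, since the conjugates $\overline{\tau^j}$ cancel the $\tau^j$. Hence one may push $\nu$ forward under the (measurable, fiberwise-defined) map that multiplies each $u^j$ by the unit scalar making $(u^j)_1$ real and nonnegative; this yields a measure $\mu$ with the same integral and $\supp(\mu)\subseteq\bS_{\C,+}^{n_1,\ldots,n_m}$. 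Identifying $\bS_{\C,+}^{n_1,\ldots,n_m}$ with $K$ via $u^j=x_j^{\re}+\sqrt{-1}\,x_j^{\im}$ (so that $\|u^j\|^2=\|x_j\|^2=1$ and $(u^j)_1=(x_j)_1\ge 0$, $(x_j^{\im})_1=0$), the entrywise identity $\cH=\int[z_1,\dots,z_m]_\oh\,\mathtt{d}\mu$ reads, coordinate $(I,J)$ by coordinate, exactly as $\cH_{IJ}=\int_K P_{IJ}(x)\,\mathtt{d}\mu$, because unwinding $(u_1\otimes\cdots\otimes u_m\otimes\overline{u}_1\otimes\cdots\otimes\overline{u}_m)_{i_1\cdots i_m j_1\cdots j_m}=\prod_s (u^s)_{i_s}\overline{(u^s)}_{j_s}$ is precisely $P_{IJ}(x)$ from \eqref{Eq:P_IJ}. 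Thus $\mu\in\sB(K)$ satisfies \eqref{Eq:sep:int:poly}.

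For the converse, suppose $\mu\in\sB(K)$ satisfies \eqref{Eq:sep:int:poly}. Running the same coordinate identification in reverse, $K\cong\bS_{\C,+}^{n_1,\ldots,n_m}\subseteq\bS_\C^{n_1,\ldots,n_m}$, and pushing $\mu$ forward to a Borel measure on $\bS_\C^{n_1,\ldots,n_m}$, the polynomial identities \eqref{Eq:sep:int:poly} reassemble coordinatewise into $\cH=\int [z_1,\dots,z_m]_\oh\,\mathtt{d}\mu$. Then \Cref{thm:Sep:ExistenceMeasure} immediately gives that $\cH$ is separable.

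The only genuinely nonroutine point — and the one I would state carefully rather than skip — is the measurability of the phase-normalization map used in the forward direction, i.e.\ checking that $(u^1,\dots,u^m)\mapsto(\tau^1u^1,\dots,\tau^m u^m)$ with $\tau^j=\overline{(u^j)_1}/|(u^j)_1|$ (and $\tau^j=1$ when $(u^j)_1=0$) is Borel measurable, so that the pushforward measure is well defined; everything else is the mechanical unwinding of tensor indices into the products $P_{IJ}$, the identification $\|u^j\|^2-1=\|x_j\|^2-1=h_j(x)$ and $(u^j)_1^{\re}=g_j(x)$, and an appeal to \Cref{thm:Sep:ExistenceMeasure}. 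I would phrase the whole argument as: "This is immediate from \Cref{thm:Sep:ExistenceMeasure} together with the discussion preceding the statement," and include just the measurability remark and the index computation for completeness.
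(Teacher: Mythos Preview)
Your proposal is correct and takes essentially the same approach as the paper. In fact, the paper gives no explicit proof at all beyond the sentence ``Then Theorem~\ref{thm:Sep:ExistenceMeasure} implies the following,'' relying entirely on the coordinate changes and phase-normalization described in the preceding paragraphs; your write-up simply fills in those details (including the measurability remark, which the paper omits).
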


Next, we formulate the above as a moment optimization problem,
following similar ideas in \cite{LiNiSepa20}
adapted to our new formulation of the moment problem.
We write each $P_{IJ}$ as a sum of real and imaginary parts
\[
P_{IJ}(x) \, = \,R_{IJ}(x)+\sqrt{-1}T_{IJ}(x)
\]
for real polynomials $R_{IJ}, T_{IJ}\in \R[x] =\R[x_1,\ldots,x_m]$.
Likewise, the tensor entries $\cH_{IJ}$ of $\cH$ can be written as
\begin{align}\label{Eq:Hre-Him}
\cH_{IJ} \, = \,
\cH_{IJ}^{\re}+\sqrt{-1}\cH_{IJ}^{\im},
\end{align}
for real entries $\cH_{IJ}^{\re}, \cH_{IJ}^{\im}$.
Since $\cH$ is Hermitian, it holds that
\[
\cH_{IJ}^{\re}=\cH_{JI}^{\re}, \quad
\cH_{IJ}^{\im}=-\cH_{JI}^{\im}, \quad \cH_{II}^{\im}=0 .
\]
Therefore, it suffices to consider
$\cH_{IJ}^{\re}$ with $I\le J$ and $\cH_{IJ}^{\im}$ with $I< J$.
For a polynomial $F(x) \in \R[x]$,
we consider the moment optimization problem
\begin{align}
\label{program:sep-meas}
\begin{cases}
\underset{\mu}{\text{ min }} &\int_{K} F(x) \mathtt{d}\mu\\
\text{ s.t. } &\cH_{IJ}^{\re} = \int_{K} R_{IJ}(x) \mathtt{d}\mu,
        \;\; (I\leq J), \\
\phantom{\text{ s.t. }} &\cH_{IJ}^{\im} = \int_{K} T_{IJ}(x) \mathtt{d}\mu,
          \;\; (I<J), \\
\phantom{\text{ s.t. }} &\mu\in \sB(K).
\end{cases}
\end{align}
To ensure that \eqref{program:sep-meas} has a unique minimizer, one can choose
$F(x)$ to be a generic polynomial in $\Sig[x]_{2m}$.
We introduce the moment cone
\begin{align} \label{Eq:momentcone}
\sR_{2m}(K) \, := \, \left\{ %y\in \R^{\N^{2N-m}_{2m}}:
y = (y_\alpha)
%\left|
:\begin{array}{l}
\exists \mu\in \sB(K), \, \text{ such that } \\
(y)_{\alpha}=\int x^{\alpha} \mathtt{d}\mu, \,
\forall \alpha \in \N^{2N-m}_{2m}
\end{array}
%\right.
\right\}.
\end{align}
Then, \eqref{program:sep-meas} is equivalent to the following optimization
\begin{align}\label{program:sep-momcone}
\begin{cases}
\underset{y}{\text{ min }} &\langle F,y \rangle\\
\text{ s.t. } &\cH_{IJ}^{\re} = \langle R_{IJ},y \rangle,
     \;\; (I\le J ), \\
\phantom{\text{ s.t. }} &\cH_{IJ}^{\im} = \langle T_{IJ},y \rangle,
      \;\; (I<J ),  \\
\phantom{\text{ s.t. }} & y\in \sR_{2m}(K). % \sR^{2N-m}_{2m}.
\end{cases}
\end{align}
For the coefficient vector $\textbf{f} := (f^{\re}, f^{\im})$ with
\[
f^{\re} \, := \,  \big( f_{IJ}^{\re}  \big)_{I \le J}, \quad
f^{\im} \, := \,   \big( f_{IJ}^{\im}  \big)_{I < J},
\]
denote the polynomials
\[
G(\textbf{f}) \, := \, F(x) -\sum_{I\le J} f_{IJ}^{\re} \cdot R_{IJ}(x)
- \sum_{I< J}f_{IJ}^{\Im} \cdot T_{IJ}(x).
\]
Then the optimization problem dual to \eqref{program:sep-momcone} is
\begin{align} \label{program:sep-SDP-dual-meas}
\left\{ \begin{array}{cl}
\max\limits_{\textbf{f}} & \sum_{I\le J} f^{\re}_{IJ}\cH^{\re}_{IJ}
             +\sum_{I< J}f^{\im}_{IJ}\cH^{\im}_{IJ} \\
\text{s.t.} & G(\textbf{f}) \in \mathscr{P}_{2m}(K),
\end{array} \right.
\end{align}
where $\mathscr{P}_{2m}(K)$ denotes the cone of polynomials in
$\R[x]_{2m}$ that are nonnegative on $K$.

In the recent work \cite{LiNiSepa20}, a similar moment optimization
is formulated for detecting separability.
We point out the differences between the one in \cite{LiNiSepa20}
and the one presented in the above.
In \cite{LiNiSepa20}, the optimization is formulated for the set
$\bS_{\C}^{n_1,\ldots,n_m}$, while we formulate it for the set $\bS_{\C,+}^{n_1,\ldots,n_m}$.
Consequently, the variable $x$ in \cite{LiNiSepa20}
has length $2N$, while the $x$ here only has length $2N-m$.
Moreover, in \cite{LiNiSepa20} the polynomial $F$
is chosen to have degree $2m+2$,
while  we require $F$ only to be of degree $2m$.
The description of the set $\bS_{\C,+}^{n_1,\ldots,n_m}$
has $m$ more scalar inequalities (i.e., $g(x) \ge 0$) than $\bS_{\C}^{n_1,\ldots,n_m}$,
but it has $m$ less polynomial indeterminate variables.
The computational cost of solving moment relaxations
rapidly grows as the number of indeterminate variables increases.
Moreover, the inequality $g(x) \ge 0$ makes the moment relaxation stronger,
because it gives additional localizing matrix inequalities.
Therefore, our moment optimization formulation (\ref{program:sep-momcone})
is more efficient for the computational purpose,
which is also demonstrated in our numerical experiments.
In contrast, the classical moment optimization formulation in \cite{LiNiSepa20}
is less efficient. Please note that when $\cH$ is separable,
there are always infinitely many decompositions such as
\[
  \cH \, = \,  [\tau_1^1u_1^1, \ldots, \tau_1^mu_1^m]_{\otimes h}
+ \cdots + [\tau_r^1u_r^1, \ldots, \tau_r^m u_r^m]_{\otimes h},
\]
because the above is satisfied for all unitary scalars $\tau_i^j$.
Consequently, the flat truncation condition is unlikely
to be satisfied for moment relaxations.
However, our moment optimization formulation
can avoid this issue by requiring the leading entry
of each decomposing vector to be real and nonnegative.

\subsection{A semidefinite relaxation algorithm}
\label{Subsec:Semidefinite Algorithm}

The moment cone $\sR_{2m}(K)$ can be approximated well by semidefinite relaxations.
Select a generic $F(x)\in \Sigma[x]_{2m}$.
Consider the hierarchy of semidefinite relaxations
\begin{align}
\label{program:sep-SDP}
\begin{cases}
\underset{w}{\text{ min }} &\langle F,w \rangle\\
\text{ s.t. } &\langle R_{IJ},w \rangle=\cH_{IJ}^{\re},
        \, (I,J\in S, I\le J )\\
\phantom{\text{ s.t. }}& \langle T_{IJ},w\rangle=\cH_{IJ}^{\im},
       \, (I,J\in S, I<J) \\
\phantom{\text{ s.t. }} &L_h^{(k)}(w)=0, M_k(w)\succeq 0,L_g^{(k)}(w) \succeq 0,\\
\phantom{\text{ s.t. }} &w\in \R^{\N^{2N-m}_{2k}},
\end{cases}
\end{align}
for relaxation orders $k = m, m+1, \cdots$.
The dual optimization of the above is
\begin{align}
\label{program:sep-SDP-dual}
\begin{cases}
\max\limits_{\textbf{f}} & \sum_{I\le J}f^{\re}_{IJ}\cH^{\re}_{IJ}
                +\sum_{I< J}f^{\im}_{IJ}\cH^{\im}_{IJ} \\
\text{ s.t. } & G(\textbf{f}) \in I_{2k}(h)+Q_k(g).
\end{cases}
\end{align}
This yields the following algorithm.

\begin{alg} \label{Alg:membership-in-sep-cone}
\textsc{Detecting separability for Hermitian tensors.}
\begin{itemize}

\item [Input:] A Hermitian tensor $\cH \in \C^{[n_1, \ldots, n_m]}$.

\item [Output:]
Either a positive $\C$-Hermitian decomposition of $\cH$, particularly affirming membership in $\sS_{\C}^{[n_1,\ldots,n_m]}$, or an answer that $\cH$ is not separable.

\item [Step 0:] Let $k=m$. Choose a generic $F(x)\in \Sigma[x]_{2m}$.

\item [Step 1:] Solve the semidefinite optimization~\eqref{program:sep-SDP}.
If it is infeasible, output that $\cH$ is not separable, and stop;
otherwise, solve it for a minimizer $w^{\star,k}$ and let $t := 1$.

\item [Step 2:]
Let $w:=w^{\star,k}|_{2t}$. Check whether or not the rank condition
\begin{equation} \label{rank:FT:sepaH}
\rank M_{t-1}(w) \, = \, \rank M_t(w)
\end{equation}
holds. If it does, go to Step~4; otherwise, go to Step~3.

\item [Step 3:]
If $t<k$, set $t=t+1$ and go to Step~2; otherwise, set $k=k+1$ and go to Step~1.

\item [Step 4:]
Let $r := \rank M_t(w)$. Compute the weights $\lam_1>0,\ldots,\lam_r>0$
and $v^{(1)},\ldots,v^{(r)}\in K$ such that
\begin{equation} \label{decomp:w:lam:x(i)}
w \, = \, \lam_1 [v^{(1)}]_{2t} + \cdots + \lam_r [v^{(r)}]_{2t}.
\end{equation}
For each $i=1,\ldots,r$, write that
$v^{(i)} = (v_1^{(i)}, \ldots, v_m^{(i)})$ with each $v_j^{(i)} \in \R^{2n_j-1}$
and for $j=1,\ldots, m$,  let
\[
u_i^{j} \,:= \, \big( (v_j^{(i)})_1, \ldots,  (v_j^{(i)})_{n_j} \big) +
\sqrt{-1}  \big( 0, (v_j^{(i)})_{n_j+1}, \ldots,  (v_j^{(i)})_{2n_j-1} \big).
\]
Output the positive decomposition
$\cH = \sum_{i=1}^{r} \lam_i [u_i^1, \ldots, u_i^m]_{\otimes h}.$
\end{itemize}
\end{alg}

In the Step 0, the generic polynomial
$F  \in \Sigma[x]_{2m}$ can be selected as $F = [x]^T_m (G^TG)[x]_m$,
with a random square matrix $G$ of length $\tbinom{2N-m+d/2}{d/2} $,
i.e., each entry of $G$ is a real random variable fulfilling normal (Gaussian) distribution.
The Step 1 is justified by Theorem~\ref{Thm:Non-Separability}
in Subsection~\ref{Subsec:Convergence}.
The Step 2 requires checking if $w$ satisfies the rank condition \eqref{rank:FT:sepaH}.
When the rank condition~\eqref{rank:FT:sepaH} is satisfied,
one can use the method in \cite{Henrion:Lasserre:GloptiPoly-OptimizationSolutionextraction}
to get a positive $\C$-Hermitian decomposition in \eqref{decomp:w:lam:x(i)}.
This method is implemented in the software \texttt{GloptiPoly3} \cite{Henrion:Lasserre:GloptiPoly3}.
We point out that the vectors $u_i^{j}$ must belong to the set
$\bS_{\C,+}^{n_1,\ldots,n_m}$ if \eqref{rank:FT:sepaH} holds
{(see
\cite{Henrion:Lasserre:GloptiPoly-OptimizationSolutionextraction,Laurent:Survey})}.
Algorithm~\ref{Alg:membership-in-sep-cone}
can be conveniently implemented in \texttt{GloptiPoly3};
see Subsection~\ref{Subsec:Examples-sep} for numerical experiments.
As shown in Theorem \ref{Thm:AsymptoticConv},
the hierarchy of relaxations~\eqref{program:sep-SDP}
asymptotically converges for solving \eqref{program:sep-meas}.
Moreover, Theorem~\ref{Thm:FiniteConv} shows that
it terminates within finitely many loops
under certain conditions.
In our numerical experiments, the rank condition~\eqref{rank:FT:sepaH}
is satisfied for all cases.

Algorithm~\ref{Alg:membership-in-sep-cone} is similar to the Algorithm~1
in \cite{LiNiSepa20}. They are both based on solving the Lasserre type
Moment-SOS relaxations. However, they are also quite different.
The constraining set $\bS_{\C,+}^{n_1,\ldots,n_m}$ 
in Algorithm~\ref{Alg:membership-in-sep-cone} is a subset
of $\bS_{\C}^{n_1,\ldots,n_m}$ which is used in \cite{LiNiSepa20}.
Thus $\bS_{\C,+}^{n_1,\ldots,n_m}$ has $m$ fewer variables
and the semidefinite relaxation \eqref{program:sep-SDP}
is stronger than the one in \cite{LiNiSepa20}. Moreover,
the objective polynomial $F$ has a lower degree than the one in \cite{LiNiSepa20}.
Therefore, Algorithm~\ref{Alg:membership-in-sep-cone}
can detect separability for larger sized Hermitian tensors.
We remark that Algorithm~\ref{Alg:membership-in-sep-cone}
can be applied to check separability for all Hermitian tensors,
no matter their ranks are high or low.

\subsection{Convergence properties}
\label{Subsec:Convergence}

Now we study the convergence of Algorithm~\ref{Alg:membership-in-sep-cone}.
In \cite[Theorem~2]{LiNiSepa20}, Li and Ni proved the subsequent properties
for their semidefinite relaxations:
(I) If the semidefinite relaxation is infeasible for some order $k$,
then the Hermitian tensor $\cH$ is not separable.
(II) If $\cH$ is separable, then their relaxations can asymptotically get
a positive Hermitian decomposition,
i.e., the accumulation points of minimizers of the relaxations
solve the moment optimiation problem \eqref{program:sep-meas}
and give positive decompositions.
Their proof uses the results in \cite{Nie:ATKMP}.
In this subsection, we prove stronger convergence properties
for Algorithm~\ref{Alg:membership-in-sep-cone}.
In fact, if $\cH$ is not separable, we show that
the semidefinite relaxation \eqref{program:sep-SDP}
must be infeasible for all $k$ large enough.
Furthermore, we prove the finite convergence for
Algorithm~\ref{Alg:membership-in-sep-cone} under some conditions.

First, we show that non-separability of a Hermitian tensor
is equivalent to infeasibility of
the semidefinite relaxation \eqref{program:sep-SDP}
for some order $k$.

\begin{theorem}  \label{Thm:Non-Separability}
Let $\cH$, $\cH_{IJ}^{\re}$, $\cH_{IJ}^{\im}$ be as in \eqref{Eq:Hre-Him}.
Then,  $\cH$ is not separable (i.e., $\cH\notin \sS_{\C}^{[n_1,\ldots,n_m]}$)
if and only if the semidefinite relaxation~\eqref{program:sep-SDP}
is infeasible for some $k$.
\end{theorem}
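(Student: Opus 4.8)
The plan is to prove both implications by contraposition, so that in each direction we only need to exhibit a feasible object. The easy direction is: if $\cH$ is separable, then \eqref{program:sep-SDP} is feasible for every relaxation order $k$. By \Cref{cor:sep-meas} there is a measure $\mu\in\sB(K)$ with $\cH_{IJ}=\int_K P_{IJ}\,\mathtt{d}\mu$ for all $I,J\in S$; splitting $P_{IJ}=R_{IJ}+\sqrt{-1}\,T_{IJ}$ and $\cH_{IJ}=\cH_{IJ}^{\re}+\sqrt{-1}\,\cH_{IJ}^{\im}$ as in \eqref{Eq:Hre-Him}, the truncated moment vector $w=\big(\int_K x^{\alpha}\,\mathtt{d}\mu\big)_{\alpha\in\N^{2N-m}_{2k}}$ satisfies the linear constraints of \eqref{program:sep-SDP}, and since $\supp(\mu)\subseteq K=\{h=0,\ g\ge 0\}$ it also satisfies $L_h^{(k)}(w)=0$, $L_g^{(k)}(w)\succeq 0$, and $M_k(w)\succeq 0$. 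Hence infeasibility of \eqref{program:sep-SDP} for some $k$ forces $\cH$ to be nonseparable.

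For the converse I would prove the contrapositive: assuming \eqref{program:sep-SDP} is feasible for every $k\ge m$, I construct a measure certifying separability. Fix a feasible $w^{(k)}$ for each $k$. A preliminary observation pins down the total mass: since $\sum_{I\in S}P_{II}(x)=\prod_{s=1}^m\|x_s\|^2$ and $\prod_{s=1}^m\|x_s\|^2-1\in I(h)$ (a telescoping identity whose cofactors have degree at most $2m-2\le 2k-2$), combining the constraints $\langle R_{II},w^{(k)}\rangle=\cH_{II}^{\re}$ with $L_h^{(k)}(w^{(k)})=0$ gives $(w^{(k)})_0=\sum_{I\in S}\cH_{II}^{\re}=:c$, independent of $k$. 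The crucial step is then a uniform bound on the entries of $w^{(k)}$: writing $(x_j)_i^2=\|x_j\|^2-\sum_{l\ne i}(x_j)_l^2$ and using $L_{h_j}^{(k)}(w^{(k)})=0$ to replace $\|x_j\|^2$ by $1$ while dropping the leftover sum of squares via $M_k(w^{(k)})\succeq 0$, one gets $\langle q^2(x_j)_i^2,w^{(k)}\rangle\le\langle q^2,w^{(k)}\rangle$ for monomials $q$ of degree at most $k-1$; peeling off coordinates one at a time yields $\langle x^{2\beta},w^{(k)}\rangle\le c$ for all $|\beta|\le k$, and a Cauchy--Schwarz estimate through $M_k(w^{(k)})\succeq 0$ then gives $|(w^{(k)})_{\alpha}|\le c$ for all $|\alpha|\le 2k$. (If $c=0$ these bounds force $M_k(w^{(k)})=0$, hence $\cH=0$, which is trivially separable; so we may assume $c>0$.)

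With uniform bounds in hand, a diagonal extraction produces a subsequence $w^{(k_l)}$ converging entrywise to an infinite sequence $w^\star$ satisfying $M_d(w^\star)\succeq 0$, $L_h^{(d)}(w^\star)=0$, $L_g^{(d)}(w^\star)\succeq 0$ for all $d$, together with $\langle R_{IJ},w^\star\rangle=\cH_{IJ}^{\re}$ and $\langle T_{IJ},w^\star\rangle=\cH_{IJ}^{\im}$. These matrix conditions say exactly that the Riesz functional of $w^\star$ is nonnegative on the quadratic module $I(h)+Q(g)$, which is Archimedean because $m-\|x\|^2=-\sum_{j=1}^m(\|x_j\|^2-1)\in I(h)$. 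Putinar's Positivstellensatz for the moment problem \cite{Putinar:Positivstellensatz} then furnishes a representing measure $\mu\in\sB(K)$ for $w^\star$, so that $\int_K P_{IJ}\,\mathtt{d}\mu=\cH_{IJ}$ for all $I,J\in S$, i.e.\ \eqref{Eq:sep:int:poly} holds; by \Cref{cor:sep-meas}, $\cH$ is separable, which is the required contrapositive. (It is also useful to record that restricting a feasible point of \eqref{program:sep-SDP} at order $k+1$ to degrees $\le 2k$ gives a feasible point at order $k$, so ``infeasible for some $k$'' is equivalent to ``infeasible for all $k$ large enough''.)

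The step I expect to be the main obstacle is the uniform bound on the pseudo-moment vectors $w^{(k)}$. Since no measure is available yet, compactness of $K$ cannot be invoked, and the bound must be extracted purely from $M_k(w^{(k)})\succeq 0$ and the localizing equalities $L_{h_j}^{(k)}(w^{(k)})=0$; the constraint $\|x_j\|^2=1$ is exactly what makes this possible. After that, the entrywise limit, the verification that the limiting functional is nonnegative on the Archimedean module $I(h)+Q(g)$, and the appeal to Putinar's theorem are routine.
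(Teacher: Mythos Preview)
Your proof is correct, but the hard direction follows a genuinely different route from the paper's. The paper proves ``$\cH$ not separable $\Rightarrow$ \eqref{program:sep-SDP} infeasible'' directly via conic duality: since $\sP_{\C}^{[n_1,\ldots,n_m]}$ is the dual cone of $\sS_{\C}^{[n_1,\ldots,n_m]}$ (Theorem~\ref{thm:sep:dual}), a nonseparable $\cH$ admits a separating psd tensor $\cA$ with $\langle\cA,\cH\rangle<0$; after a small perturbation $\cA$ is positive definite, so $\cA(x)>0$ on $K$, and Putinar's Positivstellensatz places $\cA(x)$ in $I_{2k_0}(h)+Q_{k_0}(g)$. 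Scaling the corresponding $\textbf{f}$ makes the dual \eqref{program:sep-SDP-dual} unbounded above, hence the primal \eqref{program:sep-SDP} is infeasible for $k\ge k_0$. You instead argue the contrapositive purely on the primal side: from feasible pseudo-moments $w^{(k)}$ for all $k$ you extract uniform bounds (using $L_{h_j}^{(k)}(w^{(k)})=0$ and $M_k(w^{(k)})\succeq0$), pass to a diagonal limit $w^\star$, and invoke the moment-problem side of Putinar to produce a representing $K$-measure.

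Both arguments ultimately rest on archimedeanity of $I(h)+Q(g)$, but they use it differently: the paper applies Putinar on the SOS side together with weak duality, while you apply it on the moment side and never touch the dual problem or the cone $\sP_{\C}^{[n_1,\ldots,n_m]}$. The paper's argument is shorter and gives, for free, an explicit relaxation order $k_0$ at which infeasibility begins (tied to the SOS certificate of $\cA(x)$). Your argument is longer in the uniform-bound step but is self-contained on the primal side and, as you note, also recovers the ``infeasible for all large $k$'' strengthening via the obvious restriction of feasible points from order $k+1$ to order $k$.
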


\begin{proof}
``if" direction: Note that \eqref{program:sep-SDP} 
is a relaxation of \eqref{program:sep-momcone}. If \eqref{program:sep-SDP} is infeasible,
then \eqref{program:sep-momcone} must be infeasible
and hence $\cH$ is not separable.

\smallskip \noindent
``only if" direction:
Recall that $\sP_{\C}^{[n_1,\ldots,n_m]}$ is the dual cone of
$\sS_{\C}^{[n_1,\ldots,n_m]}$, by Theorem~\ref{thm:sep:dual}.
If $\cH$ is not $\C$-separable, there exists a psd tensor
$\cA_1\in \sP_{\C}^{[n_1,\ldots,n_m]}$ such that $\langle \cA_1,\cH\rangle < 0$.
For $\eps > 0$, let $\cA$ be the Hermitian tensor such that
\[ \cA(z,\overline{z})=\cA_1(z,\overline{z})+\eps (z_1^*z_1)\cdots (z_m^*z_m) . \]
If $\eps > 0$ is sufficiently small, $\langle \cA,\cH\rangle<0$
and $\cA$ is $\C$-positive definite. Write that
$\cA = \cA^{\re} + \sqrt{-1}\cA^{\im}$, where $\cA^{\re},\cA^{\im}$
are both real tensors. Since $\cA$ is positive definite,
for the variable $x$ as in the Subsection~\ref{Subsec:MomentProblem}, we have that
\[
\cA(x) \,:= \, \langle \cA, [x_1^{\re}+\sqrt{-1}x_1^{\im}, \ldots, 
x_m^{\re}+\sqrt{-1}x_m^{\im}]_{\otimes h} \rangle =
\]
\[
\sum_{I,J\in S}\cA^{\re}_{IJ} R_{IJ} +
\sum_{I,J\in S}\cA^{\im}_{IJ} T_{IJ}>0, \text{ for all } x \in K.
\]
Select $\textbf{f} = (f^{\re},f^{\im})$ as follows
\[
f^{\re}_{IJ} = \left \{
 \begin{array}{ll}
   -\cA^{\re}_{IJ} & \text{if $I=J$} \\
  -2\cA^{\re}_{IJ} & \text{if $I<j$}
 \end{array} \right. ,\, \,
f^{\im}_{IJ} = -2\cA^{\im}_{IJ} \text{ for $I<J$}.
\]
Thus, $G(\textbf{f}) = F(x) + \cA(x)$.
By Putinar's Positivstellensatz \cite{Putinar:Positivstellensatz},
we have $\cA(x)\in I_{2k_0}(h)+Q_{k_0}(g) $ for some $k_0$.
Since $F(x) \in \Sig[x]_{2m}$, we have
\[
F(x)+\tau\cA(x) \, \in \, I_{2k_0}(h)+Q_{k_0}(g)
\]
for all $\tau>0$. This implies that $\tau \textbf{f}$ is feasible for
\eqref{program:sep-SDP-dual} for all $\tau>0$.
Moreover, for the above choice of $\textbf{f}$,
the objective value in \eqref{program:sep-SDP-dual} is such that
\[
\sum_{I\le J}\tau f^{\re}_{IJ}\cH^{\re}_{IJ}+
\sum_{I< J}\tau f^{\im}_{IJ}\cH^{\im}_{IJ}
=\tau \langle -\cA,\cH\rangle \to +\infty,
\]
as $\tau \to +\infty$. Therefore, the dual problem \eqref{program:sep-SDP-dual}
is unbounded from above and hence, by duality,
the primal problem~\eqref{program:sep-SDP}
must be infeasible for all $k \ge k_0$.
\end{proof}

Second, we prove the asymptotic convergence
of the hierarchy of relaxations \eqref{program:sep-SDP}
for solving the moment optimization \eqref{program:sep-meas}.
For the minimizer $w^{\star,k}$, recall that the notation
$w^{\star,k}\vert_{2m}$ denotes the subvector of entries
$(w^{\star,k})_{\alpha}$ with $|\alpha| \le 2m$.
The $w^{\star,k}\vert_{2m}$ is called
the truncation of $w^{\star,k}$ with degree $2m$.
The asymptotic convergence for Algorithm~\ref{Alg:membership-in-sep-cone}
means that the truncated sequence $\{w^{\star,k}\vert_{2m}\}^\infty_{k=m}$
of minimizers is bounded and all its accumulation points are optimizers
of the moment optimization \eqref{program:sep-meas}.
The proof is based on results in \cite{nie2015linear}.

\begin{theorem}\label{Thm:AsymptoticConv}
Let $\cH$ be a separable Hermitian tensor.
If $F(x)$ is a generic polynomial in $\Sig[x]_{2m}$,
then we have the following properties:
\begin{enumerate}
		
\item[(i)] For all $k\geq m$, the semidefinite relaxation
\eqref{program:sep-SDP} has an optimizer $w^{\star,k}$.

\item[(ii)] The truncated sequence  $\{w^{\star,k}\vert_{2m}\}_{k=m}^\infty$
is bounded and all its accumulation points are optimizers of
the moment optimization problem \eqref{program:sep-meas}.

\end{enumerate}
\end{theorem}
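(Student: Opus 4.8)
The plan is to prove each part by invoking the general theory of moment-SOS relaxations on compact sets, specialized to the set $K$ in \eqref{Eq:Def:K}.

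\textbf{Part (i): existence of optimizers.}
First I would verify that the relaxation \eqref{program:sep-SDP} is feasible for every $k \ge m$. Since $\cH$ is separable, Corollary~\ref{cor:sep-meas} gives a measure $\mu \in \sB(K)$ satisfying \eqref{Eq:sep:int:poly}; taking all moments of $\mu$ up to degree $2k$ produces a feasible point $w$, because a measure on $K$ automatically satisfies $L_h^{(k)}(w)=0$, $M_k(w)\succeq 0$, $L_g^{(k)}(w)\succeq 0$ and the linear moment constraints on $R_{IJ},T_{IJ}$. Next I would argue that the feasible set is compact. The key point is that $K$ is compact (each $\|x_j\|=1$), so the constraint $\|x_j\|^2 - 1 = 0$, encoded in $L_h^{(k)}(w)=0$, together with $M_k(w) \succeq 0$ forces all diagonal moments $y_{2\alpha}$ to be bounded: from $\|x_j\|^2=1$ one deduces that the entries of $M_k(w)$ on the diagonal are bounded by $1$ (the total-degree-$2d$ diagonal moment of any unit-sphere-supported measure with total mass $y_0$ is at most $y_0$), and $y_0$ itself is bounded because, e.g., $y_0 = \langle \mathbf{1}, \mu\rangle$ is controlled by the linear constraints (take $I=J$ and sum, or note $\sum_I \cH_{II}^{\re} = \int (\sum_I P_{II})\,d\mu = \int (z_1^*z_1)\cdots(z_m^*z_m)\,d\mu = \mu(K)$ on $K$). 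Boundedness of the diagonal of a psd matrix bounds all its entries, hence the feasible set is closed and bounded, i.e., compact. A continuous linear functional on a nonempty compact set attains its minimum, giving $w^{\star,k}$.

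\textbf{Part (ii): boundedness and optimality of accumulation points.}
Boundedness of $\{w^{\star,k}|_{2m}\}_{k\ge m}$ is immediate from the degree-$2m$ part of the above uniform bound (it does not depend on $k$). For the optimality of accumulation points, I would invoke \cite[the relevant convergence theorem]{nie2015linear}, which applies because: the Archimedean condition holds for $I(h)+Q(g)$ (the ideal $I(h)$ already contains $\|x_1\|^2+\cdots+\|x_m\|^2 - m$, whose negative plus a constant certifies boundedness, so $N - \|x\|^2 \in I(h)+Q(g)$ for $N=m$); the feasible set of the moment problem \eqref{program:sep-momcone}/\eqref{program:sep-meas} is nonempty since $\cH$ is separable; and $F$ is a \emph{generic} element of $\Sig[x]_{2m}$, which by the genericity argument in \cite{nie2015linear} ensures \eqref{program:sep-meas} has a unique minimizing measure and that the truncated moment sequences of the relaxations converge to its moments. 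Concretely, let $w^\star$ be any accumulation point of $\{w^{\star,k}|_{2m}\}$, say along a subsequence $k_\ell$. For each fixed degree $2m \le 2t$, the further-truncated sequences $w^{\star,k_\ell}|_{2t}$ are bounded, so by a diagonal argument one extracts a subsequence along which $w^{\star,k_\ell}$ converges entrywise to some infinite sequence $w^\infty$ with $w^\infty|_{2m}=w^\star$; passing to the limit in the psd and localizing constraints (which are closed conditions) shows $w^\infty$ satisfies $L_h^{(k)}(w^\infty)=0$, $M_k(w^\infty)\succeq 0$, $L_g^{(k)}(w^\infty)\succeq 0$ for all $k$, so by Putinar's theorem $w^\infty$ is the moment sequence of a measure $\nu \in \sB(K)$; this $\nu$ satisfies the constraints of \eqref{program:sep-meas} and, by a standard lower-semicontinuity/optimality comparison using weak duality between \eqref{program:sep-SDP} and \eqref{program:sep-SDP-dual}, achieves the optimal value, hence $\nu$ is optimal for \eqref{program:sep-meas} and $w^\star$ is its truncated moment sequence, i.e., an optimizer of \eqref{program:sep-momcone}.

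\textbf{Main obstacle.}
The routine parts are the feasibility and compactness bookkeeping; the substantive step is importing the genericity-based asymptotic convergence from \cite{nie2015linear} and checking its hypotheses hold verbatim for our $K$ and our choice of objective degree $2m$. In \cite{LiNiSepa20} the analogous result was stated for objective degree $2m+2$ on the larger set $\bS_\C^{n_1,\ldots,n_m}$; the only thing needing care is that lowering the degree to $2m$ and adding the inequalities $g(x)\ge 0$ does not break the genericity argument — but since $\Sig[x]_{2m}$ still has nonempty interior and the reduced set $K$ is still compact with Archimedean quadratic module, the cited theorem applies unchanged. I would therefore spend the bulk of the write-up making this reduction explicit rather than reproving the convergence machinery.
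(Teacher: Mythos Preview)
Your proposal is correct, and for part (ii) it follows the same route as the paper: both verify the Archimedean condition for the constraints defining $K$ (via the ball relation $\|x\|^2 = m$ coming from $h$) and then invoke the asymptotic convergence theorem from \cite{nie2015linear}. Your additional sketch of that theorem's proof (diagonal extraction, passing to a full moment sequence, optimality via duality) is not needed once the citation is in place, but it does no harm.

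For part (i), however, your approach differs from the paper's. The paper argues via duality: since a generic $F \in \Sigma[x]_{2m}$ lies in the interior of the SOS cone, the point $\mathbf{f}=0$ is strictly feasible for the dual \eqref{program:sep-SDP-dual}, so strong duality holds and the primal attains its optimum. You instead argue directly that the primal feasible set is compact: summing the diagonal constraints $\langle R_{II},w\rangle = \cH_{II}^{\re}$ and reducing $\prod_j \|x_j\|^2$ to $1$ via $L_h^{(k)}(w)=0$ fixes $(w)_0$, and then the standard moment-matrix bound under a ball constraint controls all entries. Both arguments are valid; yours is more self-contained and in fact does not use the genericity of $F$ at all for part (i), whereas the paper's slicker duality argument relies on $F \in \mathrm{int}(\Sigma[x]_{2m})$. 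One small point to tighten in your write-up: when you bound $(w)_0$ you phrase it measure-theoretically (``$\mu(K)$''), but at the SDP level you should instead say that the localizing constraints allow the reduction $\langle \prod_j \|x_j\|^2, w\rangle = \langle 1, w\rangle$ for $k \ge m$, which is exactly the identity you need.
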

\begin{proof}
Since the Hermitian tensor $\cH$ is separable
(i.e., $\cH\in \sS_{\C}^{[n_1,\ldots,n_m]}$),
there is a measure $\mu$ satisfying \eqref{program:sep-meas},
by Corollary~\ref{cor:sep-meas}. Hence the problem
\eqref{program:sep-momcone} is feasible.

(i) Since \eqref{program:sep-momcone} is feasible,
the problem \eqref{program:sep-SDP} is feasible as well.
The genericity of $F(x)$ implies that $F$ lies in the interior of
$\Sig[x]_{2m}$. Therefore, \eqref{program:sep-SDP} is bounded from below and
$(f^{\re},f^{\im})=(0,0)$ is an interior point of
the dual optimization \eqref{program:sep-SDP-dual}.
Therefore, the strong duality holds and the semidefinite relaxation
\eqref{program:sep-SDP} must have an optimizer $w^{\star,k}$.

(ii) The set $K$ satisfies the ball condition
\[ \|x\|^2 = \sum_{j=1}^{m}\|x_j^{\re}\|^2+\|x_j^{\im}\|^2\leq m, \]
so the archimedeanness holds for the constraining polynomials of $K$.
The conclusion then follows from
\cite[Theorem 4.3(ii)]{nie2015linear}.
\end{proof}

Last, we study when Algorithm~\ref{Alg:membership-in-sep-cone},
terminates within finitely many loops.
This occurs under some assumptions on the optimizer of
\eqref{program:sep-SDP-dual-meas}.

\begin{assumption} \label{assump:finite converge}
Suppose $\textbf{f}^*$ is a maximizer of the optimization
\eqref{program:sep-SDP-dual-meas} and the polynomial
$F^*:= G(\textbf{f}^*)$ satisfies the conditions:
\begin{enumerate}
\item [i)] There exists $k_1$ such that $F^*\in I_{2k_1}(h)+Q_{k_1}(g)$;

\item [ii)] The optimization problem
 \begin{equation*}
				\min \, F^*(x) \,\, s.t. \,\, h(x)=0, g(x)\ge 0
 \end{equation*}
has finitely many KKT points $u$ for which $F^*(u)=0$.
\end{enumerate}
\end{assumption}

We refer to \cite{NieFT} for the notion of KKT points.
Assumption \ref{assump:finite converge} holds if $F^*$
is a generic point on the boundary of $\mathscr{P}_{2m}(K)$
(see \cite{Nie:OptimalityConditions}).
The following is the finite convergence result.

\begin{theorem}\label{Thm:FiniteConv}
Let $\cH\in \sS_{\C}^{[n_1,\ldots,n_m]}$.
Suppose $F(x) \in \inter(\Sig[x]_{2m})$,
Assumption \ref{assump:finite converge} holds,
and $w^{\star,k}$ is a minimizer of \eqref{program:sep-SDP}
for the relaxation order $k$. Then, for all $k>t$ sufficiently large,
the rank condition \eqref{rank:FT:sepaH} must be satisfied.
\end{theorem}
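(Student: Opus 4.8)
The plan is to derive \Cref{Thm:FiniteConv} from the general finite-convergence/flat-truncation theory for moment optimization (\cite{Nie:OptimalityConditions,NieFT,nie2015linear}), fed with the conic-duality data already attached to \eqref{program:sep-momcone}--\eqref{program:sep-SDP-dual-meas}. Write $\rho$ for the optimal value of \eqref{program:sep-momcone} and $\rho_k$ for that of the relaxation \eqref{program:sep-SDP}. Since $\cH$ is separable, \eqref{program:sep-momcone} is feasible by Corollary~\ref{cor:sep-meas}; moreover every measure feasible for \eqref{program:sep-meas} has the same total mass (one checks $\sum_{I} P_{II}\equiv 1$ on $K$), so the feasible set of measures is weak-$\ast$ compact and an optimal measure $\mu^*$ exists. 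Because $F\in\inter(\Sig[x]_{2m})$, the argument in the proof of Theorem~\ref{Thm:AsymptoticConv}(i) shows that \eqref{program:sep-SDP} has strong duality with \eqref{program:sep-SDP-dual} and attains $\rho_k$, while Theorem~\ref{Thm:AsymptoticConv}(ii) gives $\rho_k\to\rho$.

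First I would prove that the optimal values stabilize. By Assumption~\ref{assump:finite converge}(i), $F^*=G(\textbf{f}^*)\in I_{2k_1}(h)+Q_{k_1}(g)$, so the maximizer $\textbf{f}^*$ of \eqref{program:sep-SDP-dual-meas} is feasible for \eqref{program:sep-SDP-dual} whenever $k\ge k_1$. Conversely, any $G(\textbf{f})\in I_{2k}(h)+Q_k(g)$ is nonnegative on $K$, hence lies in $\mathscr{P}_{2m}(K)$, so the feasible set of \eqref{program:sep-SDP-dual} is contained in that of \eqref{program:sep-SDP-dual-meas}. Thus for $k\ge k_1$ the optimal values of \eqref{program:sep-SDP-dual} and \eqref{program:sep-SDP-dual-meas} coincide, and by strong duality this common value equals $\rho_k$; since $\rho_k\to\rho$ we conclude $\rho_k=\rho$ for all $k\ge k_1$, there is no duality gap for the pair \eqref{program:sep-momcone}--\eqref{program:sep-SDP-dual-meas}, and $\textbf{f}^*$ attains value $\rho$. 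Feeding this into $\mu^*$ gives $\int F^*\,\mathtt{d}\mu^* = \int F\,\mathtt{d}\mu^* - \rho = 0$, and since $F^*\ge 0$ on $K\supseteq\supp(\mu^*)$ it follows that $F^*\equiv 0$ on $\supp(\mu^*)$. Hence $0$ is the minimum of $F^*$ over $K$, attained precisely on $\{x\in K: F^*(x)=0\}$; as the constraints $\|x_j\|^2=1,\ (x_j)_1\ge 0$ satisfy the linear independence constraint qualification at every point of $K$, this minimizer set consists of KKT points of $\min F^*$ subject to $h=0,\ g\ge 0$, and is therefore finite by Assumption~\ref{assump:finite converge}(ii).

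Next, fix $k\ge k_1$ and let $w:=w^{\star,k}$ be any minimizer of \eqref{program:sep-SDP}. Substituting the affine constraints of \eqref{program:sep-SDP} into the definition of $G$,
\[
\langle F^*, w\rangle \;=\; \langle F,w\rangle - \sum_{I\le J} f^{*,\re}_{IJ}\cH^{\re}_{IJ} - \sum_{I<J} f^{*,\im}_{IJ}\cH^{\im}_{IJ} \;=\; \rho_k-\rho \;=\; 0 .
\]
Writing $F^* = p+\sigma_0+g_1\sigma_1+\cdots+g_m\sigma_m$ with $p\in I_{2k_1}(h)$, $\sigma_0\in\Sig[x]_{2k_1}$ and $\sigma_j\in\Sig[x]_{2k_1-2}$, the constraint $L_h^{(k)}(w)=0$ annihilates $\langle p,w\rangle$, while $M_k(w)\succeq 0$ and $L_g^{(k)}(w)\succeq 0$ force $\langle\sigma_0,w\rangle\ge 0$ and $\langle g_j\sigma_j,w\rangle\ge 0$; hence all these terms vanish. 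This is exactly the configuration covered by the finite-convergence criterion for linear moment optimization: an optimality-certifying polynomial $F^*\in I(h)+Q(g)$, a linear objective $\langle F,\cdot\rangle$, and a finite set of minimizers, each a KKT point. By the arguments of \cite{Nie:OptimalityConditions,NieFT} in the moment-linear form of \cite{nie2015linear}, there is then an integer $t$ such that for all sufficiently large $k$ (in particular $k>t$) the truncation $w^{\star,k}\vert_{2t}$ satisfies $\rank M_{t-1}(w^{\star,k}\vert_{2t})=\rank M_t(w^{\star,k}\vert_{2t})$, i.e.\ the rank condition \eqref{rank:FT:sepaH} holds.

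The routine parts are the conic-duality bookkeeping and the nonnegativity manipulations above; the genuine obstacle is the last step — upgrading the scalar identity $\langle F^*,w^{\star,k}\rangle=0$ to a stabilization of $\rank M_t(w^{\star,k}\vert_{2t})$. Assumption~\ref{assump:finite converge}(ii) is precisely what makes this possible: without finiteness (and the implicit nondegeneracy) of the KKT minimizers of $F^*$, the zero set of $F^*$ on $K$ could carry positive-dimensional components and the moment matrix need not flatten. If a self-contained argument is preferred over citing the general theorem, I would localize $w^{\star,k}$ near each KKT minimizer, use $F^*=p+\sum_j g_j\sigma_j$ together with $\langle\sigma_0,w\rangle=\langle g_j\sigma_j,w\rangle=0$ to show the relevant moment functionals are supported on the vanishing ideal of the finite KKT set, and then read off rank stabilization from the finite dimensionality of the associated coordinate ring.
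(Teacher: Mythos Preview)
Your proposal is correct and follows essentially the same route as the paper: the paper's proof is a one-line citation of Theorem~4.6 in \cite{nie2015linear}, and what you have written is precisely the conic-duality and complementarity bookkeeping (stabilization $\rho_k=\rho$ for $k\ge k_1$, the identity $\langle F^*,w^{\star,k}\rangle=0$, and finiteness of the zero set of $F^*$ on $K$ via LICQ and Assumption~\ref{assump:finite converge}(ii)) that feeds into that theorem. In other words, you have unpacked the hypotheses needed to invoke the same result rather than taken a different path.
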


\begin{proof}
The conclusion follows from Theorem~4.6 of \cite{nie2015linear}.
\end{proof}

\subsection{Numerical examples}
\label{Subsec:Examples-sep}

In this subsection, we present examples for detecting separability of Hermitian tensors
by using Algorithm~\ref{Alg:membership-in-sep-cone}.
The algorithm can be implemented in the software 
\texttt{GloptiPoly3} \cite{Henrion:Lasserre:GloptiPoly3}, 
which calls the SDP solver 
\texttt{SeDuMi}~\cite{Henrion:Lasserre:GloptiPoly-OptimizationSolutionextraction}.
Since the semidefinite programs are solved numerically,
we display only four decimal digits for the computational results.
The computation is implemented in MATLAB R2019b,
on an \verb!Intel(R) Core(TM) i7-8550U CPU!
with $3.79$ GHz and $16$ GB of RAM.
{In Examples \ref{Ex:Comparison1} and \ref{Ex:Comparison2},
we also compare our new moment optimization formulation
with the traditional one in \cite{LiNiSepa20}.}

\begin{example}
Consider the Hankel tensor $\cH \in \C^{[2,2]}$ in \cite{Nie:Ye:HankelTensor}
such that
\[
\cH_{i_1i_2j_1j_2} \, = i_1+i_2+j_1+j_2
\]
for all $1\leq i_1,i_2, j_1,j_2\leq 2$. The tensor $\cH$ is not separable,
detected by Algorithm~\ref{Alg:membership-in-sep-cone},
since the semidefinite relaxation \eqref{program:sep-SDP}
is infeasible for $k=2$. The computation took around $0.8$ second.
\end{example}

\begin{example}
Consider the tensor $\cH \in \C^{[3,3]}$ such that
\[
\cH_{i_1i_2j_1j_2} \,=\, i_1j_1 + i_2j_2
\]
for all $i_1, i_2, j_1, j_2$ in the range. It is separable, detected by
Algorithm~\ref{Alg:membership-in-sep-cone} for $k=2$.
We got the positive Hermitian decomposition
$\cH= \lam_1 [u_1^1,u_1^2]_{\otimes h} + \lam_2[u_2^1,u_2^2]_{\otimes h}$,
with weights $\lam_1=\lam_2=42$ and
\begin{align*}
u_1^1=\begin{pmatrix}
\sqrt{14}/14\\ \sqrt{14}/7\\ 3/\sqrt{14}\\
\end{pmatrix}, u_1^2=\begin{pmatrix}\sqrt{3}/3\\\sqrt{3}/3\\\sqrt{3}/3\end{pmatrix}, u_2^1=\begin{pmatrix}\sqrt{3}/3\\\sqrt{3}/3\\\sqrt{3}/3\end{pmatrix}, u_2^2=\begin{pmatrix}
\sqrt{14}/14\\\sqrt{14}/7\\ 3/\sqrt{14}\\
\end{pmatrix}.
\end{align*}
The computation took around $2.7$ seconds.
\end{example}

\begin{example}
Consider the Hermitian tensor
$\cH=\frac{1}{2} \psi_1 \otimes
\overline{\psi_1}+\frac{1}{2} \psi_2\otimes \overline{\psi_2},
$
where
\[
\begin{array}{l}
\psi_1:=\frac{1}{\sqrt{3}}(e_1\otimes e_1+e_1\otimes e_2+\sqrt{-1}e_2\otimes e_2), \\ \psi_2:=\frac{1}{3\sqrt{2}}(e_1\otimes e_1-e_1\otimes e_2+4\sqrt{-1}e_2\otimes e_1),
\end{array}
\]
for $e_1:=(1,0),e_2:=(0,1)$.
In terms of the eigenvalue decomposition
of the Hermitian flattening matrix,
it was shown in \cite[Example 6.1]{Ni:HermTensor}
that this state is not separable.
The semidefinite relaxation \eqref{program:sep-SDP} is infeasible for $k=2$,
so we know $\cH \notin \sS_{\C}^{[2,2]}$ not separable.
The computation took around $0.8$ second.
\end{example}

In what follows, we consider more general Hermitian tensors.
The weights $\lambda_i$
are set to be one by scaling the vectors $u_i^j$ accordingly.
That is, we display the positive Hermitian decomposition as
$\cH=\sum_{i=1}^{r} [u_i^1, \ldots, u_i^m]_{\otimes h}$.
Moreover, we use the notation $\mathrm{i} := \sqrt{-1}$.
Note that a Hermitian tensor $\cH$
can be equivalently represented by
its Hermitian flattening matrix $\mathfrak{m}(\cH)$.

\begin{example}\label{Ex:Comparison1}
Consider $\cH \in \sS_{\C}^{[2,2]}$
with the Hermitian flattening matrix
\begin{align*}
\mathfrak{m}(\cH)=
\begin{pmatrix*}[r] 32 & -8+4\sqrt{-1} & 5+\sqrt{-1} & -4\\
-8-4\sqrt{-1} & 32 & 2-8\sqrt{-1} & 1-3\sqrt{-1}\\
5-\sqrt{-1} & 2+8\sqrt{-1} & 28 & -8+5\sqrt{-1}\\
-4 & 1+3\sqrt{-1} & -8-5\sqrt{-1} & 27
\end{pmatrix*}.
\end{align*}
By Algorithm \ref{Alg:membership-in-sep-cone} with $k=3$,
we got $\cH=\sum_{i=1}^7 [u_i^1,u_i^2]$ where 
$U_1:=[u_1^1,\ldots,u_7^1]^T$, $U_2:=[u_1^2,\ldots,u_7^2]^T$
are respectively {\scriptsize
\[
U_1 = \begin{pmatrix*}[r]
			0.0000 &  -0.6229 - 1.2207\mathrm{i}\\
   			1.2181 &  -1.6690 - 0.6625\mathrm{i}\\
   			0.0000 &  -0.4866 - 2.1243\mathrm{i}\\
   			1.7133 &  -0.8874 - 1.1767\mathrm{i}\\
   			2.1137 &  -0.1468 + 0.1672\mathrm{i}\\
   			0.5438 &   0.3833 + 1.4707\mathrm{i}\\
   			2.0208 &   0.2600 + 0.1676\mathrm{i}\\
\end{pmatrix*}, \,
U_2 = \begin{pmatrix*}[r]
			0.9511 &  -0.9660 + 0.2009\mathrm{i}\\
			1.4188 &  -1.5025 - 0.6616\mathrm{i}\\
			1.7916 &   0.8825 + 0.8722\mathrm{i}\\
			1.5836 &   1.4979 - 0.5967\mathrm{i}\\
			1.8678 &   1.0045 - 0.1405\mathrm{i}\\
			1.1035 &  -1.1751 + 0.0825\mathrm{i}\\
			1.1819 &  -1.4087 + 0.8934\mathrm{i}\\
\end{pmatrix*}.
\]}\noindent The computation took about $2$ seconds.
This Hermitian tensor is separable.
The classical formulation in \cite{LiNiSepa20}
took about $120$ seconds
to solve for the same relaxation order $k=3$
and did not get a positive decomposition.
For $k=4$, the one in \cite{LiNiSepa20} took about $3.5$
hours and still failed to detect separability.
\end{example}

\begin{example}\label{Ex:Comparison2}
Consider $\cH \in \sS_{\C}^{[3,3]}$ whose flattening matrix
$\mathfrak{m}(\cH)$ is {\small
\[
\begin{pmatrix*}[r]
10 & -2-2{}\mathrm{i} & 1+1{}\mathrm{i} & 7-\mathrm{i} & -2-4{}\mathrm{i} & 2{}\mathrm{i} & -4-6{}\mathrm{i} & 0 & -2\\ -2+2{}\mathrm{i} & 10 & -6+1{}\mathrm{i} & -2 & 5+3{}\mathrm{i} & -5+1{}\mathrm{i} & -4-4{}\mathrm{i} & -4+2{}\mathrm{i} & 3+1{}\mathrm{i}\\
1-\mathrm{i} & -6-\mathrm{i} & 12 & 2+4{}\mathrm{i} & -5-\mathrm{i} & 8+1{}\mathrm{i} & 4+6{}\mathrm{i} & -3-\mathrm{i} & -4-2{}\mathrm{i}\\ 7+1{}\mathrm{i} & -2 & 2-4{}\mathrm{i} & 9 & -1-3{}\mathrm{i} & -1-\mathrm{i} & 1-7{}\mathrm{i} & -2 & -4+2{}\mathrm{i}\\
 -2+4{}\mathrm{i} & 5-3{}\mathrm{i} & -5+1{}\mathrm{i} & -1+3{}\mathrm{i} & 8 & -5-\mathrm{i} & -2{}\mathrm{i} & 4 & 2{}\mathrm{i}\\
 -2{}\mathrm{i} & -5-\mathrm{i} & 8-\mathrm{i} & -1+1{}\mathrm{i} & -5+1{}\mathrm{i} & 11 & 2+4{}\mathrm{i} & -2+4{}\mathrm{i} & 3-5{}\mathrm{i}\\
 -4+6{}\mathrm{i} & -4+4{}\mathrm{i} & 4-6{}\mathrm{i} & 1+7{}\mathrm{i} & 2{}\mathrm{i} & 2-4{}\mathrm{i} & 20 & -3-\mathrm{i} & 2\\
  0 & -4-2{}\mathrm{i} & -3+1{}\mathrm{i} & -2 & 4 & -2-4{}\mathrm{i} & -3+1{}\mathrm{i} & 17 & -9+1{}\mathrm{i}\\ -2 & 3-\mathrm{i} & -4+2{}\mathrm{i} & -4-2{}\mathrm{i} & -2{}\mathrm{i} & 3+5{}\mathrm{i} & 2 & -9-\mathrm{i} & 22
\end{pmatrix*}.
\]}\noindent
By Algorithm~\ref{Alg:membership-in-sep-cone} with $k=2$,
we got the positive Hermitian decomposition
$\cH=\sum_{i=1}^{9} [u_i^1,u_i^2]_{\otimes h}$, where $U_1:=[u_1^1,\ldots,u_9^1]^T,U_2:=[u_1^2,\ldots,u_9^2]^T$
are given as {\scriptsize
\[
U_1 = \begin{pmatrix*}[r]
		0.0000  & -1.1632 - 0.5687{}\mathrm{i}&  -0.2972 - 0.8659{}\mathrm{i} \\
		-0.0000 & -1.1309 + 0.5564{}\mathrm{i}&  -0.8436 - 0.2873{}\mathrm{i} \\
		 1.3161 & 0.6580 -0.6580{}\mathrm{i} & -0.6580 - 0.6580{}\mathrm{i} \\
		 1.0000 &  -1.0000 + 1.0000{}\mathrm{i} &  1.0000 - 1.0000{}\mathrm{i}\\
		 1.4142 &   0.7071 + 0.7071{}\mathrm{i} & -1.4142 + 0.0000{}\mathrm{i}\\
		 0.8691 &  -0.4346 - 0.4345{}\mathrm{i} &  0.8691 - 0.0000{}\mathrm{i}\\
		 1.3375 &  -0.6687 - 0.6687{}\mathrm{i} &  0.0000 + 1.3375{}\mathrm{i}\\
		 1.3375 &  -0.6687 + 0.6687{}\mathrm{i} &  1.3375 + 0.0000{}\mathrm{i}\\
		 0.6921 &  -0.3460 - 0.3461{}\mathrm{i} &  0.6921 + 0.0000{}\mathrm{i}
\end{pmatrix*},
\]
\[
U_2 = \begin{pmatrix*}[r]
		0.7928  &  0.0000 - 0.7928{}\mathrm{i} & -0.7928 - 0.7929{}\mathrm{i}\\
		0.7718  & -0.0000 - 0.7718{}\mathrm{i} & -0.7718 - 0.7718{}\mathrm{i}\\
		1.0746  &  0.0000 - 0.0000{}\mathrm{i} & -1.0746 + 1.0746{}\mathrm{i}\\
		1.4142  & 0.7071 - 0.7071{}\mathrm{i}  &-0.0000 - 1.4142{}\mathrm{i}\\
		1.0000  &  1.0000 + 1.0000{}\mathrm{i} & -1.0000 + 1.0000{}\mathrm{i}\\
		0.0000  &  0.0000 + 0.0000{}\mathrm{i} & -0.5884 + 1.2419{}\mathrm{i}\\
		1.0574  &  1.0574 - 0.0000{}\mathrm{i} &  1.0574 + 1.0574{}\mathrm{i}\\
		1.0574  &  1.0574 - 0.0000{}\mathrm{i} & -1.0574 + 1.0574{}\mathrm{i}\\
		0.0000  &  0.0000 - 0.0000{}\mathrm{i} &  0.8382 + 0.7034{}\mathrm{i}
\end{pmatrix*} .
\]}\noindent
The computation took around $4.2$ seconds.
This Hermitian tensor is separable.
The formulation in \cite{LiNiSepa20} took about
$30$ seconds to solve for $k=2$ and failed to
get a positive decomposition.
For $k=3$, the one in \cite{LiNiSepa20} took about $6$ hours
to solve and still failed to detect separability.
\end{example}

\begin{example}
Consider the tensor $\cH \in \sS_{\C}^{[2,2,2]}$
with $\mathfrak{m}(\cH)$ being the matrix
{\scriptsize
\[
\begin{pmatrix*}[r]
18 & -2+8{}\mathrm{i} & -16-8{}\mathrm{i} & 4-4{}\mathrm{i} & -2-4{}\mathrm{i} & 2+16{}\mathrm{i} & 2-2{}\mathrm{i} & -4-12{}\mathrm{i}\\ -2-8{}\mathrm{i} & 50 & 16{}\mathrm{i} & -34-26{}\mathrm{i} & -10+20{}\mathrm{i} & 2+32{}\mathrm{i} & 8-16{}\mathrm{i} & 30-30{}\mathrm{i}\\
-16+8{}\mathrm{i} & -16{}\mathrm{i} & 32 & -18+18{}\mathrm{i} & 10+6{}\mathrm{i} & -8-16{}\mathrm{i} & -2-6{}\mathrm{i} & 6+14{}\mathrm{i}\\
4+4{}\mathrm{i} & -34+26{}\mathrm{i} & -18-18{}\mathrm{i} & 78 & 4-28{}\mathrm{i} & 2+2{}\mathrm{i} & -14+30{}\mathrm{i} & -4+22{}\mathrm{i}\\
-2+4{}\mathrm{i} & -10-20{}\mathrm{i} & 10-6{}\mathrm{i} & 4+28{}\mathrm{i} & 22 & 12+6{}\mathrm{i} & -18-8{}\mathrm{i} & -12\\ 2-16{}\mathrm{i} & 2-32{}\mathrm{i} & -8+16{}\mathrm{i} & 2-2{}\mathrm{i} & 12-6{}\mathrm{i} & 70 & -16+12{}\mathrm{i} & -50-26{}\mathrm{i}\\
2+2{}\mathrm{i} & 8+16{}\mathrm{i} & -2+6{}\mathrm{i} & -14-30{}\mathrm{i} & -18+8{}\mathrm{i} & -16-12{}\mathrm{i} & 30 & 2+10{}\mathrm{i}\\
 -4+12{}\mathrm{i} & 30+30{}\mathrm{i} & 6-14{}\mathrm{i} & -4-22{}\mathrm{i} & -12 & -50+26{}\mathrm{i} & 2-10{}\mathrm{i} & 86
\end{pmatrix*}.
\]}\noindent
By Algorithm~\ref{Alg:membership-in-sep-cone} with $k=3$,
we got the positive Hermitian decomposition \linebreak
$\cH=\sum_{i=1}^{6} [u_i^1,u_i^2, u_i^3]_{\otimes h}$, where
\[
U_1:=[u_1^1,\ldots,u_6^1]^T, \,
U_2:=[u_1^2,\ldots,u_6^2]^T, \,
U_3:=[u_1^3,\ldots,u_6^3]^T
\] are shown as follows
{\scriptsize
\[
U_1	= \begin{pmatrix*}[r]
		1.0191 &   2.0381 - 0.0000{}\mathrm{i}\\
		1.2222 &  -1.2222 - 1.2222{}\mathrm{i}\\
	   -0.0000 &  -1.2100 - 1.6470{}\mathrm{i}\\
	   -0.0001 &   1.2959 - 0.4964{}\mathrm{i}\\
		1.4837 &  -0.7419 - 0.7418{}\mathrm{i}\\
		1.3077 &  -1.3077 + 0.0000{}\mathrm{i}
\end{pmatrix*}, \quad
U_2 = \begin{pmatrix*}[r]
		1.6113 &  -1.6113 + 0.0000{}\mathrm{i}\\
		0.9467 &  -1.8935 + 0.0000{}\mathrm{i}\\
		1.4451 &   0.0000 + 1.4451{}\mathrm{i}\\
		0.9812 &   0.0000 + 0.9812{}\mathrm{i}\\
		1.4836 &  -0.7418 + 0.7418{}\mathrm{i}\\
		0.8270 &   0.0000 + 1.6541{}\mathrm{i}	
\end{pmatrix*},
\]
\[
U_3 = \begin{pmatrix*}[r]
		1.2181 &  -0.6090 - 1.8270{}\mathrm{i}\\
		1.7285 &  -0.8642 + 0.8642{}\mathrm{i}\\
		1.4451 &   0.8671 - 1.1561{}\mathrm{i}\\
		0.9812 &   0.5888 - 0.7851{}\mathrm{i}\\
		1.2848 &  -0.0000 + 1.2850{}\mathrm{i}\\
		1.3077 &   1.3077 - 0.0000{}\mathrm{i}
	\end{pmatrix*} .
\]}\noindent
The computation took around $5$ minutes.
This Hermitian tensor is separable.
\end{example}

\section{The psd Decompositions}
\label{Sec:C-psd-decomp}

This section studies positive semidefinite (psd) decompositions (see Definition \ref{def:C-psd:dec})
for separable Hermitian tensors. Let $\F = \C$ or $\R$.
If a Hermitian tensor $\cH$ is $\F$-separable, then
there are vectors $u_i^j \in \F^{n_j}$ such that $\cH$ has the decomposition
\[
\cH = [u_1^1, \ldots, u_1^m]_{\otimes h} + \cdots +
[u_r^1, \ldots, u_r^m]_{\otimes h}.
\]
Recall that equivalently,
the tensor $\cH$ is $\F$-separable if and only if
it has a $\F$-psd decomposition like
\[
  \mathfrak{m}(\cH) \, = \, {\sum}_{i=1}^s
B_1^i \boxtimes \cdots \boxtimes B_m^i,
\]
where $B_j^i \in  \F^{n_j \times n_j}$ are Hermitian psd matrices.

When $\cH$ is $\R$-Hermitian decomposable
(i.e., \eqref{Eq:rank-oneHD} holds for real vectors $u_i^j$),
we would like to remark that
$\cH$ is $\R$-separable if and only if it is $\C$-separable
(see \cite[Lemma~6.2]{NYHerm20}).
The $\R$-separability of $\R$-Hermitian decomposable tensors
can also be detected by checking their $\C$-separability.
Therefore, this section focuses on the complex case $\F=\C$.
For convenience of writing, the $\C$-psd ranks and
$\C$-psd decompositions are just simply called psd ranks and psd decompositions.
Similarly, the psd rank $\psdrank_{\C}(\cH)$
is also abbreviated to $\psdrank(\cH)$.

It is generally a big challenge to compute psd ranks, as well as psd decompositions.
%directly from Hermitian flattening matrices.
To address this problem, we appeal to the theories and methods for tensor decompositions.
A Hermitian tensor $\cH \in \C^{[n_1,\ldots,n_m]}$
can be flattened into the $m$th order tensor
$\bT(\cH) \in \C^{n_1^2 \times  \cdots \times n_m^2}$ such that
\begin{equation} \label{def:T(H)}
\bT(\cH) = \sum_{i=1}^r \big(u_i^1 \boxtimes \overline{ u_i^1 } \big)
\otimes \cdots \otimes \big(u_i^m  \boxtimes \overline{ u_i^m }\big).
\end{equation}
The psd decomposition \eqref{sepa:A=sum:Bij:ot}
yields the following decomposition
\begin{align}\label{Eq:DecompositionT}
\bT(\cH) = \sum_{i=1}^s
\text{vec}({B}^i_1) \otimes \cdots \otimes \text{vec}({B}^i_m),
\end{align}
where $\text{vec}({B}^i_j)$ denotes the vectorization of the matrix $B_j^i$.
Decomposing $\cH$ directly is usually very hard,
since its Hermitian rank can be very high. However,
the psd rank of $\cH$ may be smaller than $\hrank(\cH)$.
For Hermitian tensors with small psd ranks, 
we show in Theorems~\ref{Thm:uniqueCpsdDecomposition-smallS} 
and \ref{prop:r>n2} that the corresponding decomposition \eqref{Eq:DecompositionT}
of $\bT(\cH)$ is generically
the unique rank decomposition of the tensor $\bT(\cH)$.
For the case of small $\psdrank(\cH)$, decomposing $\bT(\cH)$
offers an alternate efficient way for computing
psd decompositions, which also certifies the separability.

The following result shows that when the tensor $\bT(\cH)$
admits a rank decomposition in the form of \eqref{Eq:DecompositionT},
then $\cH$ must be separable and its psd rank coincides with $\rank(\bT(\cH))$.

\begin{lemma}\label{prop: TH}
For a Hermitian tensor $\cH\in \C^{[n_1,\ldots,n_m]}$,
if $\rank (\bT(\cH) ) = s$ and it has the decomposition \eqref{Eq:DecompositionT}
for Hermitian psd matrices ${B}^i_j$,
then $\cH$ is separable and $\psdrank(\cH) = s$.
\end{lemma}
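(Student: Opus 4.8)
The plan is to reduce the statement to the matrix flattening $\mathfrak{m}$, which is already known to be a bijection from $\C^{[n_1,\ldots,n_m]}$ onto $\bH^M$. The first thing I would do is record the elementary fact that the cubic-order flattening $\bT$ and the matrix flattening $\mathfrak{m}$ encode the same data: there is a fixed linear reshaping $\Phi$ which, for each $j$, collapses the $j$-th row-block index and the $j$-th column-block index of an $M\times M$ Hermitian matrix into a single index in $[n_j^2]$, so that $\bT = \Phi\circ\mathfrak{m}$. Checking this on the rank-one generators $[v_1,\ldots,v_m]_\oh$ shows that $\Phi$ carries a Kronecker product $C_1\boxtimes\cdots\boxtimes C_m$ to the rank-one tensor $\text{vec}(C_1)\otimes\cdots\otimes\text{vec}(C_m)$ (in the $\text{vec}$ convention matching the chosen index collapse). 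Since $\mathfrak{m}$ is a bijection and $\Phi$ is injective, $\bT$ is injective, and $\Phi$ intertwines the two kinds of product decomposition: $\mathfrak{m}(\cH)=\sum_i B_1^i\boxtimes\cdots\boxtimes B_m^i$ holds if and only if $\bT(\cH)=\sum_i \text{vec}(B_1^i)\otimes\cdots\otimes\text{vec}(B_m^i)$ holds.

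Granting this, $\psdrank(\cH)\le s$ is immediate. I would apply $\Phi^{-1}$ to the given decomposition \eqref{Eq:DecompositionT}; since the matrices $B_j^i$ are Hermitian psd, this produces
\[
\mathfrak{m}(\cH) \,=\, \sum_{i=1}^s B_1^i\boxtimes\cdots\boxtimes B_m^i,
\]
which is a psd decomposition in the sense of \eqref{sepa:A=sum:Bij:ot}. Writing each $B_j^i$ as a sum of rank-one psd matrices $uu^*$ turns this into a positive decomposition of the form \eqref{krondc:m(A)}, so by the discussion just before Definition~\ref{def:C-psd:dec} the tensor $\cH$ is separable, and $\psdrank(\cH)\le s$ by Definition~\ref{def:C-psd:dec}.

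For the reverse inequality $s\le\psdrank(\cH)$, I would set $s':=\psdrank(\cH)$ and fix a psd decomposition $\mathfrak{m}(\cH)=\sum_{i=1}^{s'}\tilde B_1^i\boxtimes\cdots\boxtimes\tilde B_m^i$ of minimal length, with each $\tilde B_j^i$ Hermitian psd. Applying $\Phi$ gives
\[
\bT(\cH) \,=\, \sum_{i=1}^{s'} \text{vec}(\tilde B_1^i)\otimes\cdots\otimes\text{vec}(\tilde B_m^i),
\]
a sum of at most $s'$ rank-one tensors, whence $\rank(\bT(\cH))\le s'$, i.e.\ $s\le s'$. Combined with the previous paragraph this yields $\psdrank(\cH)=s$. (The degenerate case $\bT(\cH)=0$ forces $\cH=0$, for which the statement is trivial.)

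The only step carrying real content is the bookkeeping behind $\Phi$: verifying on the rank-one generators that collapsing paired block indices turns $\mathfrak{m}$ into $\bT$, and simultaneously turns matrix Kronecker products into tensor products of vectorizations. This is routine linear algebra, but it is the place where one must be careful about the Kronecker-product and $\text{vec}$ conventions; once it is in place, the lemma follows at once from the duality and the definitions of rank and psd rank already recorded in the paper.
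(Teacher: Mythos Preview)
Your proof is correct and follows essentially the same two-inequality argument as the paper: the given decomposition of $\bT(\cH)$ with psd $B_j^i$ yields a psd decomposition of $\cH$ of length $s$, hence $\psdrank(\cH)\le s$ and separability; conversely any psd decomposition of $\cH$ of length $s'$ produces a length-$s'$ decomposition of $\bT(\cH)$, giving $s=\rank(\bT(\cH))\le s'$. The paper is terser only because it treats the correspondence you call $\Phi$ between $\mathfrak{m}$-decompositions and $\bT$-decompositions as already established in the paragraph preceding the lemma, whereas you spell it out.
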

\begin{proof}
The above decomposition of $\bT(\cH)$ admits a psd decomposition of
$\cH$ with the same length, so we have $\cH$ is separable and
$\psdrank(\cH)\le s $. Each psd decomposition of $\cH$
also gives a decomposition of the tensor $\bT(\cH)$. Thus,
\[
s \, = \, \rank(\bT(\cH)) \,\le \, \psdrank(\cH) \, \le \,  s .
\]
Hence $\cH$ is separable and $\psdrank(\cH)=s$.
\end{proof}

\subsection{The case $m\ge 3$}

Lemma~\ref{prop: TH} connects the psd decomposition of
$\cH$ and the rank decomposition of $\bT(\cH)$.
We are interested in incidents when the rank decomposition of
$\bT(\cH)$ gives a psd decomposition for $\cH$.
These decompositions are related as in \eqref{Eq:DecompositionT}.
If there is a unique rank decomposition for $\bT(\cH)$,
we are able to get a psd decomposition for $\cH$, as well as the psd rank.
Notably, the psd rank can be smaller than the Hermitian rank
and the tensor $\bT(\cH)$ has higher individual dimensions than $\cH$.
There exist efficient tensor decomposition methods
for tensors with high individual dimensions and low ranks.
This is the motivation for us to consider the flattening tensor $\bT(\cH)$.

For the case $m \ge 3$, if the psd rank is low, the tensor
$\bT(\cH)$ has the unique decomposition of the same rank.
The classical Kruskal's theorem concerns uniqueness of tensor decompositions.
Here we briefly review this result.
For a set $S$ of vectors, its \textit{Kruskal rank}, denoted as $k_S$,
is defined to be the maximum number $k$ such that every subset of $k$
vectors in $S$ is linearly independent.
The following result is due to Kruskal for $m=3$ 
\cite{Kruskal:UniqueDecomposition,Kruskal:RankDecompositionUniqueness}
and is due to Sidiropoulos and Bro \cite{Sidiropoulos:Bro} for $m>3$.

\begin{theorem} \label{thm:krus}
Let $\cA\in  \C^{n_1\times \cdots \times n_m} $ be the tensor
\[
\cA = \sum_{i=1}^r \lambda_i u_i^1 \otimes \cdots \otimes u_i^m,
\]
{
where $u_i^j \in \C^{n_j}$ and the $\lambda_i$'s are nonzero complex scalars.}
Let $U_j = [u_1^j,\ldots,u_r^j] $ and $k_{U_j}$ be the Kruskal rank of $U_j$. If
\[
	k_{U_1}+\cdots + k_{U_m} \ge 2r+m-1,
\]
then $\rank(\cA) = r$ and its rank decomposition is unique,
up to scaling and permutations.
\end{theorem}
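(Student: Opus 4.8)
The plan is to reduce the general case $m \ge 3$ to the base case $m = 3$, and then prove the third-order statement via Kruskal's permutation lemma. For the reduction, I would partition the modes $\{1,\dots,m\}$ into three nonempty blocks $G_1,G_2,G_3$ and, for each block, replace the factor matrices by their Khatri--Rao (columnwise Kronecker) product $W_l := \bigodot_{j \in G_l} U_j$, so that $\cA$ becomes a third-order tensor with factor matrices $W_1,W_2,W_3$ and the same weights $\lambda_i$. The key auxiliary fact is the Kruskal-rank inequality $k_{A \odot B} \ge \min(k_A + k_B - 1,\, r)$ for Khatri--Rao products; iterating it gives $k_{W_l} \ge \min\big(\sum_{j \in G_l} k_{U_j} - |G_l| + 1,\, r\big)$. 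A short counting argument then shows the blocks can be chosen so that $k_{W_1} + k_{W_2} + k_{W_3} \ge 2r + 2$: in the non-saturating case this is exactly $\sum_j k_{U_j} - (m-3) \ge 2r+2$, and the saturating cases only help. Applying the $m=3$ result to $(W_1,W_2,W_3)$ shows they are determined up to a common column permutation and diagonal scaling, and then I would ``unmerge'' using the elementary fact that a nonzero Kronecker product $\bigotimes_{j} v_j$ determines each $v_j$ up to reciprocal scalars; this recovers uniqueness of the original $U_j$.

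For the core case $m = 3$, write $\cA = \sum_{i=1}^r \lambda_i u_i^1 \otimes u_i^2 \otimes u_i^3$ and let $\cA = \sum_{i=1}^{r'} \mu_i v_i^1 \otimes v_i^2 \otimes v_i^3$ be an arbitrary competing decomposition with $r' \le r$ and no zero terms; the goal is to show $r' = r$ and, after reindexing, $v_i^j = c_i^j u_i^j$ with $\mu_i \prod_j c_i^j = \lambda_i$ (uniqueness among all decompositions of length at most $r$ in particular rules out any shorter one, so $\operatorname{rank}(\cA)=r$). Two technical ingredients are needed. The first is rank bounds for matrices of the form $U_1 \operatorname{diag}(d) U_2^{\top}$; via the Sylvester rank inequality and the definition of Kruskal rank one gets $\operatorname{rank}\big(U_1 \operatorname{diag}(d) U_2^{\top}\big) \ge \min\big(\omega(d), k_{U_1}\big) + \min\big(\omega(d), k_{U_2}\big) - \omega(d)$, where $\omega(\cdot)$ counts nonzero entries. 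Applying this to the mode-$3$ slices of $\cA$ and their generic linear combinations, expressed through both decompositions, constrains how the supports of covectors $x$ behave against $U_3$ versus $V_3 := [v_1^3,\dots,v_{r'}^3]$. The second ingredient is Kruskal's permutation lemma: if $\bar A$ has no zero columns and, for every covector $x$, the bound $\omega(x^{\top} \bar A) \le r - k_A + 1$ implies $\omega(x^{\top} A) \le \omega(x^{\top} \bar A)$, then $\bar A = A \Pi D$ for a permutation matrix $\Pi$ and a nonsingular diagonal $D$. The slice-rank analysis supplies exactly these hypotheses for the pair $(U_3, V_3)$ (and, along the way, $r' = r$), and the symmetry of the three modes yields the same for $(U_1, V_1)$ and $(U_2, V_2)$; matching the scalars completes the argument.

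The main obstacle is the permutation lemma itself, which is the genuinely combinatorial heart of the theorem: its proof is an intricate induction on the support sizes of covectors evaluated against the two matrices, with no real shortcut. A second, more delicate point is that its hypotheses must be verified for \emph{every} covector $x$, not merely a generic one, so the slice-rank bounds above must be applied uniformly; this is where the Kruskal-rank assumption $\sum_j k_{U_j} \ge 2r + m - 1$ is genuinely consumed. The $m>3$ reduction is routine by comparison, the only care being the boundary cases where some $k_{W_l}$ saturates at $r$. Since the statement is classical, in the paper we simply cite \cite{Kruskal:UniqueDecomposition,Kruskal:RankDecompositionUniqueness,Sidiropoulos:Bro}; the outline above is the route one would follow to reprove it from scratch.
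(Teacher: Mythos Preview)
Your proposal is correct and matches the paper's treatment: the paper does not prove this theorem at all but simply attributes it to Kruskal for $m=3$ and to Sidiropoulos and Bro for $m>3$, exactly as you note in your final sentence. The outline you give---reducing $m>3$ to $m=3$ via Khatri--Rao products and the Kruskal-rank inequality $k_{A\odot B}\ge\min(k_A+k_B-1,r)$, then handling $m=3$ through slice-rank bounds and Kruskal's permutation lemma---is the standard route taken in those cited works, so nothing further is needed.
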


Theorem~\ref{thm:krus} gives a uniqueness result about
the tensor decomposition of $\bT(\cH)$.
Recall that $\bH^{n}$ denotes the set of $n \times n$ complex Hermitian matrices.
The set $\bH^{n}$ is a vector space of dimension $n^2$ over the real field.
The following is a simple but useful fact.

\begin{lemma} \label{lemma: LID herm}
Let $s\le n^2$. If $M_1,\ldots,M_s$ are generic
in $\bH^{n}$, then they are linearly independent
over both fields $\R$ and $\C$.
\end{lemma}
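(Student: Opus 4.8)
The plan is to prove Lemma~\ref{lemma: LID herm} by exhibiting a single explicit choice of $M_1,\ldots,M_s \in \bH^n$ that is linearly independent over $\C$ (hence a fortiori over $\R$), and then invoking the standard genericity principle: linear independence of a tuple of vectors in a real vector space is the non-vanishing of some maximal minor, a polynomial condition in the entries, so if it holds at one point it holds on the complement of a proper (hence measure-zero) Zariski-closed subset.

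First I would fix coordinates. The space $\bH^n$ has real dimension $n^2$; a convenient $\R$-basis is given by the matrices $E_{kk}$ for $1\le k\le n$, together with $E_{kl}+E_{lk}$ and $\sqrt{-1}(E_{kl}-E_{lk})$ for $1\le k<l\le n$, where $E_{kl}$ is the standard matrix unit. Since $s\le n^2$, I can just take $M_1,\ldots,M_s$ to be the first $s$ elements of this list in any fixed order. These are manifestly $\R$-linearly independent, being part of a basis. The one genuine point to check is $\C$-linear independence, i.e. that if $\sum_{i=1}^s c_i M_i = 0$ with $c_i \in \C$ then all $c_i=0$; equivalently, that $M_1,\ldots,M_s$ remain linearly independent when regarded as elements of the complex vector space $\C^{n\times n}$ of all $n\times n$ complex matrices. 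Writing $c_i = a_i + \sqrt{-1}\,b_i$ and separating the real and imaginary parts of the matrix equation $\sum c_i M_i = 0$ (using that each $M_i$ is Hermitian, so $M_i = M_i^\re + \sqrt{-1}\,M_i^\im$ with $M_i^\re$ real symmetric and $M_i^\im$ real skew-symmetric), one gets two real matrix equations. Because the $M_i$ chosen above have disjoint supports in the ``pattern'' sense — each uses a distinct pair $(k,l)$ or diagonal slot — the real and imaginary parts decouple entrywise and force all $a_i$ and $b_i$ to vanish; this is a short direct computation with the chosen basis.

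Having produced one linearly independent tuple, I would then argue genericity. Identify $\bH^n$ with $\R^{n^2}$ via the chosen basis, so an $s$-tuple $(M_1,\ldots,M_s)$ becomes a point of $\R^{n^2\cdot s}$. For the statement over $\R$, the $s$-tuple is $\R$-linearly independent iff the $n^2\times s$ coordinate matrix has rank $s$, iff at least one of its $s\times s$ minors is nonzero; each such minor is a polynomial in the $n^2 s$ real coordinates, so the bad locus is the common zero set of these polynomials, a Zariski-closed set, and it is proper because the explicit tuple above avoids it. A proper Zariski-closed subset of $\R^{n^2 s}$ has Lebesgue measure zero, which is exactly the meaning of ``generic'' in the paper. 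For the statement over $\C$: note that $\C$-linear independence of Hermitian matrices is equivalent to $\R$-linear independence of the $2s$ real matrices $\{M_i, \sqrt{-1}\,M_i\}_{i=1}^s$ inside $\R^{n\times n}$ (viewing $\C^{n\times n}\cong\R^{2n^2}$); this again is the rank condition on a matrix whose entries are real-polynomial functions of the coordinates of $(M_1,\ldots,M_s)$, so the bad locus is again Zariski-closed, and it is proper thanks to the explicit example. Intersecting the two bad loci (or taking their union) still gives a proper Zariski-closed, measure-zero set, and off it both conclusions hold simultaneously.

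I do not expect a serious obstacle here; the lemma is elementary. The only place that needs a little care is the verification that the explicit tuple is $\C$-linearly independent — one must remember that Hermitian matrices do \emph{not} form a complex subspace of $\C^{n\times n}$, so ``linearly independent over $\C$'' has to be interpreted as independence of the same matrices inside the ambient complex matrix space, and then checked by the real/imaginary decoupling argument above. Once that single example is in hand, the genericity conclusion is the routine ``a nonzero polynomial vanishes on a measure-zero set'' argument, and it is worth stating explicitly that the relevant independence conditions are expressible as non-vanishing of polynomials (maximal minors) in the entries so that the word ``generic'' in the statement is fully justified.
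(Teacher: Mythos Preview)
Your proposal is correct and broadly parallel to the paper's argument, though the implementation differs in a couple of ways worth noting.

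The paper identifies $\bH^n$ with $\R^{n^2}$ via an explicit linear isomorphism $\varphi$ (vectorize the real part of the upper triangle together with the imaginary part of the strict upper triangle), and then uses the single polynomial $\det\big([\varphi(M_1)\ \cdots\ \varphi(M_s)]^T[\varphi(M_1)\ \cdots\ \varphi(M_s)]\big)$ as the genericity witness, rather than the collection of $s\times s$ minors you invoke. It does not exhibit an explicit independent tuple; your construction of one from the standard Hermitian basis supplies precisely the missing verification that this polynomial is not identically zero, so in that respect your write-up is more complete. Conversely, your treatment of $\C$-linear independence is more laborious than necessary: for Hermitian $M_i$, $\R$-independence already forces $\C$-independence, since $\sum c_i M_i=0$ implies (taking conjugate transpose) $\sum \overline{c_i}M_i=0$, whence $\sum \re(c_i)M_i=\sum \im(c_i)M_i=0$. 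This one-line observation replaces both your entrywise decoupling for the explicit tuple (where, incidentally, $E_{kl}+E_{lk}$ and $\sqrt{-1}(E_{kl}-E_{lk})$ do share support, so ``disjoint supports'' is not quite the right phrase) and your separate Zariski argument for the $\C$-case. The paper simply asserts the $\C$-conclusion without spelling this out; you could insert the conjugate-transpose trick and drop a page of work.
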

\begin{proof}
Let $\varphi: \bH^{n} \to \R^{n^2}$ be the linear map such that
\begin{equation}\label{eq: Herm to R}
\varphi(H) \, := \,
\begin{bmatrix}  \varphi_1(H) \\ \varphi_2(H) \end{bmatrix},
\end{equation}
where $\varphi_1(H)$ is the vectorization of the real part of 
the upper triangular part of $H$ and $\varphi_2(H)$ 
is the vectorization of the imaginary part of the \textit{strictly} upper triangular part of $H$.
If the matrices $M_1,\ldots,M_s$ are such that
\[
\det\left(\begin{bmatrix} \varphi(M_1) & \cdots & \varphi(M_s) \end{bmatrix}^T
\begin{bmatrix} \varphi(M_1) & \cdots & \varphi(M_s) \end{bmatrix} \right)
\ne 0,
\]
then $\varphi(M_1),\ldots,\varphi(M_s)$
are linearly independent in $\R^{n^2}$
and hence $M_1,\ldots,M_s$ are also linearly independent in $\bH^{n}$.
The non-vanishing of the above determinant is a generic condition.
Therefore, if $M_1,\ldots,M_s$ are generic in $\bH^{n}$,
then they must be linearly independent, over both $\R$ and $\C$.
\end{proof}

Recall that $\bH_+^{n}$ denotes the cone of psd matrices in $\bH^{n}$.
A property is said to hold \textit{generically} in $\bH_+^{n}$ (resp., in  $\bH^{n}$)
if it holds everywhere in $\bH_+^{n}$ (resp., in  $\bH^{n}$) 
except a subset of $\bH_+^{n}$ (resp., in  $\bH^{n}$) 
with Lebesgue measure zero.

\begin{theorem}  \label{Thm:uniqueCpsdDecomposition-smallS}
Let $m\ge 3$ and $n_1\ge \cdots \ge n_m\ge 2$.
Suppose the Hermitian tensor $\cH\in \sS_\C^{[n_1,\ldots,n_m]}$
has the psd decomposition
\[
\mathfrak{m}(\cH) \, = \, {\sum}_{i=1}^s B_1^i \boxtimes \cdots \boxtimes B_m^i,
\]
for psd matrices $B_j^i \in \bH_+^{n_j}$. For each $j=1, \ldots, m$,
let $B_j := \{ B^1_j,\ldots, B^s_j \}$.
\begin{enumerate}
\item[(i)] If
\[
k_{B_1}+\cdots + k_{B_m} \ge 2s+m-1,
\]
then $\rank(\bT(\cH)) = s $ and $\bT(\cH) $ has the unique rank decomposition
\[
\bT(\cH)  \, = \, {\sum}_{i=1}^s
\Vect(B_1^i) \otimes \cdots \otimes \Vect(B_m^i).
\]
\item[(ii)] If $s\le n_2^2$ and each $B_j^i$ is generic in
$\bH_+^{n_j}$,
then the above conclusion is also true.
\end{enumerate}
\end{theorem}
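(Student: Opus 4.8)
The plan is to reduce both parts of \Cref{Thm:uniqueCpsdDecomposition-smallS} to \Cref{thm:krus} applied to the tensor $\bT(\cH)$. The decomposition $\mathfrak{m}(\cH) = \sum_{i=1}^s B_1^i \boxtimes \cdots \boxtimes B_m^i$ translates, via the flattening map $\bT$ and the vectorization identity $\Vect(A \boxtimes B) = \Vect(A) \otimes \Vect(B)$ (suitably interpreted in the tensor setting), into the candidate decomposition $\bT(\cH) = \sum_{i=1}^s \Vect(B_1^i) \otimes \cdots \otimes \Vect(B_m^i)$. So the content is entirely about uniqueness: showing this candidate has length equal to $\rank(\bT(\cH))$ and is the unique rank decomposition up to scaling and permutation.

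For part (i), I would set $U_j := [\Vect(B_1^j), \ldots, \Vect(B_s^j)]$ and observe that the Kruskal rank $k_{U_j}$ equals $k_{B_j}$, since $\Vect$ is a linear isomorphism from $\bH^{n_j}$ (as a real vector space) onto $\R^{n_j^2}$ and hence preserves linear (in)dependence of every subset — here I would cite \Cref{lemma: LID herm} implicitly or just note that $\Vect$ restricted to Hermitian matrices is injective and $\R$-linear. Then the hypothesis $k_{B_1} + \cdots + k_{B_m} \ge 2s + m - 1$ is exactly the Kruskal condition $k_{U_1} + \cdots + k_{U_m} \ge 2s + m - 1$, so \Cref{thm:krus} applies directly and yields both $\rank(\bT(\cH)) = s$ and uniqueness. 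One subtlety to address: \Cref{thm:krus} is stated over $\C$ with complex scalars $\lambda_i$, while our vectors $\Vect(B_j^i)$ are the relevant rank-one factors with unit coefficients; this is fine since Kruskal's theorem as quoted allows arbitrary nonzero scalars and arbitrary complex factor vectors, and the vectors $\Vect(B_j^i) \in \R^{n_j^2} \subseteq \C^{n_j^2}$ are legitimate complex vectors.

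For part (ii), the job is to show that the Kruskal-rank hypothesis of part (i) holds generically when $s \le n_2^2$. I would argue as follows. Each set $B_j = \{B_j^1, \ldots, B_j^s\}$ consists of generic psd matrices in $\bH_+^{n_j}$; since $s \le n_2^2 \le n_j^2$ for $j \le 2$, and more generally whenever $s \le n_j^2$, a generic family of $s$ elements of $\bH_+^{n_j}$ is in ``general linear position'' — every size-$\min(s,n_j^2)$ subset is linearly independent over $\R$ — by the argument of \Cref{lemma: LID herm} (the nonvanishing of the relevant Gram determinants is a Zariski-open, hence full-measure, condition on $\bH_+^{n_j}$, which has nonempty interior in $\bH^{n_j}$). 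Hence $k_{B_j} = \min(s, n_j^2)$ generically. In particular $k_{B_1} = k_{B_2} = s$ since $s \le n_2^2 \le n_1^2$, and $k_{B_j} \ge 1$ for $j \ge 3$. Therefore $k_{B_1} + \cdots + k_{B_m} \ge 2s + (m-2) = 2s + m - 2$, which is one short of the needed $2s + m - 1$. This gap is the main obstacle, and to close it I would sharpen the bound for $k_{B_3}$: again since the $B_3^i$ are generic and $n_3 \ge 2$ gives $n_3^2 \ge 4$, we get $k_{B_3} = \min(s, n_3^2) \ge \min(s,4) \ge 2$ as long as $s \ge 2$ (the case $s = 1$ being trivial). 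This upgrades the sum to $k_{B_1} + k_{B_2} + k_{B_3} + \cdots \ge s + s + 2 + (m-3) = 2s + m - 1$, and part (i) then gives the conclusion. I would also explicitly handle the degenerate small cases ($s = 1$, where $\bT(\cH)$ is manifestly rank one, and the borderline situation where some $n_j = 2$ forces $k_{B_j}$ to saturate at $n_j^2 = 4$ rather than $s$) to confirm the arithmetic never fails under the stated hypotheses $m \ge 3$, $n_j \ge 2$, $s \le n_2^2$.
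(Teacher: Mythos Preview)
Your proposal is correct and follows essentially the same approach as the paper: part (i) is a direct invocation of Kruskal's theorem (\Cref{thm:krus}), and part (ii) reduces to part (i) by using \Cref{lemma: LID herm} to get $k_{B_j}=\min(s,n_j^2)$ generically, then checking the arithmetic (the paper bounds every $k_{B_j}\ge 2$ for $j\ge 3$, yielding $2s+2(m-2)\ge 2s+m-1$, while you sharpen only $k_{B_3}\ge 2$ and keep $k_{B_j}\ge 1$ for $j\ge 4$, which also suffices). One small inaccuracy to fix: the vectors $\Vect(B_j^i)$ lie in $\C^{n_j^2}$, not $\R^{n_j^2}$, since Hermitian matrices have complex off-diagonal entries; this does not affect your argument because \Cref{lemma: LID herm} gives linear independence over $\C$ as well as $\R$, which is what Kruskal's condition requires.
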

\begin{proof}
(i) This follows directly from Theorem~\ref{thm:krus}.

(ii) When $s=1$, it must hold that $\rank\,\bT(\cH) =1$.
The rank decomposition of every rank-1 tensor is unique, so the conclusion holds.
So we may consider the case $s\ge2$.
Each $B_j^i$ is a $n_j\times n_j$ Hermitian matrix.
By Lemma~\ref{lemma: LID herm},
$k_{B_j} = \min(s,n_j^2) $ when $B^1_j,\ldots, B^s_j$
are generic. Since $n_2^2 \ge s \ge 2$ and each $n_j \ge 2$,
\[
k_{B_1}+\cdots + k_{B_m} = \min(s,n_1^2) + \min(s,n_2^2)+\cdots +\min(s,n_m^2)
\]
\[
\ge 2s + \min(s,n_3^2) + \cdots +\min(s,n_m^2)  \ge 2s + 2(m-2) \ge 2s+m-1.
\]
Therefore, the second statement follows from the first one.
\end{proof}

Theorem~\ref{Thm:uniqueCpsdDecomposition-smallS}
generalizes the result that if a tensor
$\cA\in \C^{n_1\times \cdots\times n_m} $
has a decomposition of length $r \le n_2$, then $\rank(\cA)=r $ for the generic case.
If $n_1\ge r >n_2$, this property may not hold 
\cite{domanov2013,domanov2015generic,stegeman2010uniqueness}.
The subsequent theorem provides a sufficient condition
for the rank decomposition to be unique, for the case $r\ge n_2$.
For two matrices $U = [u_1,\ldots,u_\ell], V=[v_1,\ldots,v_\ell]$
of the same number of columns,
their \emph{Khatri-Rao product} \cite{KR product}, 
denoted by $\odot$, is defined such that
\[
U \odot V \, := \,
\big[ u_1 \boxtimes v_1 ,\ldots, u_\ell \boxtimes v_\ell \big].
\]
For a matrix $X \in \C^{n \times r}$, define
$\mathcal{C}(X)\in \C^{n(n-1)/2 \times r(r-1)/2}$ 
to be the compound matrix \cite{domanov2013}
consisting of $2\times 2$ minors of $X$.
The rows of $\mathcal{C}(X)$ are labelled by pairs of
two distinct rows of $X$, and the columns
are labelled by pairs of two distinct columns of $X$.

\begin{theorem}[\cite{domanov2013,de2006link}] \label{thm:r>n2}
Let $\cA \in \C^{n_1\times n_2 \times n_3}$ be such that
\[
\cA = \lambda_1 a_1 \otimes b_1 \otimes c_1 + \cdots
+\lambda_r a_r\otimes b_r \otimes c_r,
\]
and let $A = [a_1,\ldots,a_r],B=[b_1,\ldots,b_r],C=[c_1,\ldots,c_r]$.
If $\rank(A) = r $ and
$\mathcal{C}(B) \odot \mathcal{C}(C)$ has linearly independent columns, then
$\rank(\cA) = r$ and the rank decomposition of $\cA$
is unique, up to scaling and permutation.
\end{theorem}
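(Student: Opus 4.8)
The plan is to reconstruct the uniqueness argument of De Lathauwer and Domanov--De Lathauwer: run the simultaneous--diagonalization idea, but carried out on the $2\times 2$ compound matrices $\mathcal C(\cdot)$ rather than on the factor matrices themselves. We may assume $r\ge 2$; note first that the hypothesis already forces $b_i\wedge b_j\neq 0$ and $c_i\wedge c_j\neq 0$ for all $i<j$ (else a column of $\mathcal C(B)\odot\mathcal C(C)$ vanishes), so every $a_i,b_i,c_i$ is nonzero. First I would reduce to the case $A=I_r$: since $\rank(A)=r$, fix a left inverse $A^{\dagger}$ with $A^{\dagger}A=I_r$ and contract $\cA$ along its first mode with $A^{\dagger}$. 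This leaves $B,C$ and the $\lambda_i$ unchanged, sends $A$ to $I_r$, and is reversible (contract back with $A$, since the columns of the mode-$1$ unfolding lie in $\mathrm{col}(A)$), so uniqueness for the reduced tensor is equivalent to uniqueness for $\cA$. After the reduction the mode-$1$ slices $\cA_k$ ($k=1,\dots,r$) are the \emph{known} rank-one matrices $\lambda_k b_k c_k^{T}$.

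The crux is the following identity. For $x\in\C^r$ set $M(x):=\sum_k x_k\cA_k=B\,\mathrm{diag}(x_k\lambda_k)\,C^{T}$. Multiplicativity of compound matrices (generalized Cauchy--Binet) gives $\mathcal C(M(x))=\mathcal C(B)\,\mathrm{diag}(x_ix_j\lambda_i\lambda_j)_{i<j}\,\mathcal C(C)^{T}$, hence
\[
  \Vect\,\mathcal C(M(x))=\bigl(\mathcal C(C)\odot\mathcal C(B)\bigr)\,v(x),\qquad v(x):=\bigl(\lambda_i\lambda_j\,x_ix_j\bigr)_{i<j},
\]
and $\mathcal C(C)\odot\mathcal C(B)$ differs from $\mathcal C(B)\odot\mathcal C(C)$ only by a row permutation, so it too has linearly independent columns. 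Two consequences: (a) if $M(x)=0$ then $v(x)=0$ by injectivity, so at most one coordinate of $x$ is nonzero, and since the slices are nonzero this forces $x=0$; hence the mode-$1$ unfolding $\cA_{(1)}$ has rank $r$. (b) Distinct monomials $x_ix_j$ are linearly independent, so $\{v(x):x\in\C^r\}$ spans $\C^{\binom r2}$, and therefore $\{\Vect\,\mathcal C(M(x)):x\in\C^r\}$ spans exactly the $\binom r2$-dimensional column space of $\mathcal C(C)\odot\mathcal C(B)$.

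Now let $\cA=\sum_{j=1}^R\mu_j a'_j\otimes b'_j\otimes c'_j$ be an arbitrary CPD with $R=\rank(\cA)\le r$. Put $B'=[b'_1,\dots],C'=[c'_1,\dots]$ and $\hat a_j:=A^{\dagger}a'_j$, $\hat A:=[\hat a_1,\dots]$, so that $\cA=\sum_j\mu_j\hat a_j\otimes b'_j\otimes c'_j$ in the reduced picture. By (a), $r=\rank\cA_{(1)}\le R\le r$, so $R=r$; factoring the rank-$r$ unfolding through $A'$ (resp.\ through $\hat A$) then forces $A'$, $\hat A$ and the relevant Khatri--Rao product of $B',C'$ to have full column rank $r$, makes $\hat A$ invertible, and gives $\mathrm{col}(A')=\mathrm{col}(A)$. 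Running the crux identity for the second decomposition gives $\Vect\,\mathcal C(M(x))=\bigl(\mathcal C(C')\odot\mathcal C(B')\bigr)\,v'(x)$ with $v'(x)=\bigl(\mu_i\mu_j(x^{T}\hat a_i)(x^{T}\hat a_j)\bigr)_{i<j}$; comparing with (b), the column space of $\mathcal C(C')\odot\mathcal C(B')$ contains, hence by dimension equals, that of $\mathcal C(C)\odot\mathcal C(B)$, so $\mathcal C(B')\odot\mathcal C(C')$ also has linearly independent columns. Specialize $x=e_k$: the slice $M(e_k)=\cA_k$ has rank one, so $\mathcal C(\cA_k)=0$, whence $v'(e_k)=0$, i.e.\ $(\hat a_i)_k(\hat a_j)_k=0$ for all $i\neq j$ and all $k$. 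Thus every row of the invertible matrix $\hat A$ has exactly one nonzero entry, so $\hat A$ is monomial: $\hat a_j=\gamma_j e_{\sigma(j)}$ for a permutation $\sigma$ and scalars $\gamma_j\neq 0$. Then $M(x)=\sum_j\mu_j\gamma_j x_{\sigma(j)}\,b'_j(c'_j)^{T}$, and equating the coefficient of each $x_k$ with that in $M(x)=\sum_k x_k\lambda_k b_k c_k^{T}$ yields $\lambda_{\sigma(j)}b_{\sigma(j)}c_{\sigma(j)}^{T}=\mu_j\gamma_j\,b'_j(c'_j)^{T}$, so $b'_j\parallel b_{\sigma(j)}$, $c'_j\parallel c_{\sigma(j)}$, and (using $\mathrm{col}(A')=\mathrm{col}(A)$, so $AA^{\dagger}a'_j=a'_j$) $a'_j=A\hat a_j=\gamma_j a_{\sigma(j)}$. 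Hence $\mu_j a'_j\otimes b'_j\otimes c'_j=\lambda_{\sigma(j)}a_{\sigma(j)}\otimes b_{\sigma(j)}\otimes c_{\sigma(j)}$, i.e.\ the CPD is unique up to scaling and permutation and $\rank(\cA)=r$.

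The hard part is that the hypothesis constrains only the given factors $B$ and $C$, whereas the proof needs the competing, a priori unknown decomposition to inherit the full-column-rank compound property; this gap is bridged precisely by consequence (b) together with the rank identity $\rank\cA_{(1)}=r$ from (a). The remaining delicate point is the bookkeeping of the mode-$1$ reduction applied to \emph{both} decompositions — establishing $R=r$, full column rank of $A'$ and of $\hat A$, and $\mathrm{col}(A')=\mathrm{col}(A)$ — which is exactly what legitimizes recovering $a'_j$ from $\hat a_j$ at the end. The Cauchy--Binet computation at the heart of the argument is routine once it is set up correctly.
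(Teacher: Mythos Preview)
The paper does not prove this statement; it is quoted from the cited references \cite{domanov2013,de2006link} and used as a black box in the proof of the subsequent Theorem~\ref{prop:r>n2}. Your proposal is a correct and faithful reconstruction of the second-compound-matrix / simultaneous-diagonalization argument from those sources: the Cauchy--Binet identity for $\mathcal C(M(x))$, the two consequences (a) and (b), the transfer of full column rank of $\mathcal C(B)\odot\mathcal C(C)$ to the competing factors $B',C'$ via (b), and the specialization $x=e_k$ forcing $\hat A$ to be monomial are exactly the steps in De Lathauwer and Domanov--De Lathauwer. The only tacit assumption you use is that every $\lambda_i\neq 0$ (needed so that the slices $\lambda_k b_kc_k^{T}$ are nonzero in (a) and so that $v(x)=0\Rightarrow x_ix_j=0$), which is harmless here since otherwise the conclusion $\rank(\cA)=r$ would be false on its face.
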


The above theorem can be applied to $\bT(\cH)$,
then we get the following result.

\begin{theorem}\label{prop:r>n2}
Let $n_1\ge n_2 \ge n_3\ge 2$. Suppose
$\cH \in \sS_\C^{[n_1,n_2,n_3]}$ is given as
\[
\mathfrak{m}(\cH) \, = \, {\sum}_{i=1}^s
A_{i}  \boxtimes B_{i} \boxtimes C_{i},
\]
for psd matrices
$A_i \in \bH^{n_1}_+,B_i\in \bH^{n_2}_+,C_i\in \bH^{n_3}_+$. Assume
\[
s\le n_1^2 \quad \quad \and \quad
\frac{s(s-1)}{2} \le \frac{n_2^2(n_2^2-1)}{2} \cdot \frac{n_3^2(n_3^2-1)}{2} .
\]
If $A_i$, $B_i$, $C_i$ are generic in
$\bH_+^{n_1},  \bH_+^{n_2}, \bH_+^{n_3}$ respectively,
then $\rank(\bT(\cH)) = s $ and $\bT(\cH)$ has the unique rank decomposition
\[
\bT(\cH) \, = \, {\sum}_{i=1}^s
\Vect(A_i) \otimes \Vect(B_i) \otimes \Vect(C_i).
\]	
\end{theorem}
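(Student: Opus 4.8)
The plan is to derive the statement from \cref{thm:r>n2}, applied to the third-order flattening $\bT(\cH)\in\C^{n_1^2\times n_2^2\times n_3^2}$. By \eqref{def:T(H)} and \eqref{Eq:DecompositionT}, the assumed psd decomposition $\mathfrak{m}(\cH)=\sum_{i=1}^s A_i\boxtimes B_i\boxtimes C_i$ gives
\[
\bT(\cH)\,=\,{\sum}_{i=1}^s \Vect(A_i)\otimes\Vect(B_i)\otimes\Vect(C_i),
\]
so with $a_i:=\Vect(A_i)$, $b_i:=\Vect(B_i)$, $c_i:=\Vect(C_i)$ and all scalars equal to $1$, it suffices to verify, for generic psd $A_i,B_i,C_i$, the two hypotheses of \cref{thm:r>n2}: that $A:=[a_1,\ldots,a_s]$ has rank $s$, and that $\mathcal{C}(B)\odot\mathcal{C}(C)$ has linearly independent columns, where $B:=[b_1,\ldots,b_s]$ and $C:=[c_1,\ldots,c_s]$. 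Then \cref{thm:r>n2} gives at once $\rank(\bT(\cH))=s$ and uniqueness of the displayed rank decomposition.

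For the first hypothesis I would argue exactly as in the proof of \cref{Thm:uniqueCpsdDecomposition-smallS}(ii): since $s\le n_1^2$, \cref{lemma: LID herm} shows that $s$ generic matrices in $\bH^{n_1}$ are linearly independent, and since $\bH_+^{n_1}$ is full-dimensional in $\bH^{n_1}$ the same holds generically on the psd cone (for instance, $s\le n_1^2$ generic rank-one projections already are linearly independent). Hence $\rank(A)=s$ for generic psd $A_i$. Note also that $\mathcal{C}(B)\odot\mathcal{C}(C)$ has $\binom{n_2^2}{2}\binom{n_3^2}{2}$ rows and $\binom{s}{2}$ columns, so the hypothesis $\tfrac{s(s-1)}{2}\le\tfrac{n_2^2(n_2^2-1)}{2}\cdot\tfrac{n_3^2(n_3^2-1)}{2}$ is precisely the dimension count that makes full column rank possible.

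The heart of the proof, and the step I expect to be the main obstacle, is the second hypothesis, since the dimension count above is only necessary. Linear independence of the columns of $\mathcal{C}(B)\odot\mathcal{C}(C)$ amounts to the nonvanishing of some $\binom{s}{2}\times\binom{s}{2}$ minor, which is a single polynomial in the real coordinates of $(B_i,C_i)\in(\bH^{n_2})^s\times(\bH^{n_3})^s$; as the psd cones are full-dimensional, it suffices to exhibit one tuple of psd matrices at which this minor is nonzero, and the property is then generic on the cones. To produce such a tuple I would combine two inputs. First, the generic-rank theory underlying \cref{thm:r>n2} (see \cite{domanov2013,de2006link,domanov2015generic}) shows that for fully generic \emph{complex} $X\in\C^{n_2^2\times s}$, $Y\in\C^{n_3^2\times s}$ the matrix $\mathcal{C}(X)\odot\mathcal{C}(Y)$ has full column rank precisely under the above binomial inequality; equivalently, some maximal minor of $\mathcal{C}(X)\odot\mathcal{C}(Y)$ is a polynomial in the entries of $X,Y$ that does not vanish identically. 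Second, every $M\in\C^{n\times n}$ splits as $M=H_1+\sqrt{-1}\,H_2$ with $H_1=\tfrac12(M+M^*)$ and $H_2=\tfrac{1}{2\sqrt{-1}}(M-M^*)$ both Hermitian, so $\Vect(\bH^{n})$ is a totally real subspace of $\C^{n^2}$ whose complexification is all of $\C^{n^2}$. A nonzero holomorphic polynomial cannot vanish identically on such a real form; hence the nonzero minor above does not vanish on all tuples whose columns are vectorizations of Hermitian matrices, and in particular not on all tuples of vectorizations of psd matrices. This supplies the required instance, so the second hypothesis too holds for generic psd $B_i,C_i$. (Alternatively one could attempt an explicit construction with rank-one matrices $B_i=b_ib_i^*$, $C_i=c_ic_i^*$ and analyze the compound matrices of the resulting rank-one factors, but the real-form reduction looks cleaner.)

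Combining the two genericity statements, for generic psd $A_i,B_i,C_i$ all hypotheses of \cref{thm:r>n2} are met, and its conclusion is precisely the assertion of the proposition. The delicate point is the second hypothesis: one must go beyond the Kruskal-type count that sufficed for \cref{Thm:uniqueCpsdDecomposition-smallS} and genuinely invoke a generic full-column-rank result for Khatri--Rao products of second compound matrices, while carefully transferring it from the unconstrained complex setting to the constrained one in which the factor columns are forced to lie in the real subspace $\Vect(\bH^{n_j})\subset\C^{n_j^2}$.
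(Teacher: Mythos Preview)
Your proof is correct and reaches the same conclusion as the paper, but the transfer from generic complex matrices to generic Hermitian matrices is carried out differently. The paper introduces the explicit real-linear isomorphism $\varphi:\bH^{n}\to\R^{n^2}$ of \eqref{eq: Herm to R}, sets $\hat B=[\varphi(B_1),\ldots,\varphi(B_s)]$ and $\hat C$ analogously, and then argues row by row that $\mathrm{Row}(\mathcal{C}(\tilde B))=\mathrm{Row}(\mathcal{C}(\hat B))$ (and likewise for $C$); this yields $\mathcal{C}(\tilde B)\odot\mathcal{C}(\tilde C)=(U_B\boxtimes U_C)\bigl(\mathcal{C}(\hat B)\odot\mathcal{C}(\hat C)\bigr)$ and reduces the question to the \emph{real} generic statement (Theorem~2.5 in \cite{de2006link}) applied directly to $\hat B,\hat C\in\R^{n_j^2\times s}$. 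Your route is more abstract: you observe that $\Vect(\bH^{n})$ is a totally real form of $\C^{n^2}$, so any nonzero holomorphic minor of $\mathcal{C}(X)\odot\mathcal{C}(Y)$ (whose existence is guaranteed by the generic result) cannot vanish identically on the Hermitian locus, and full dimensionality of $\bH_+^{n_j}$ then finishes. Your argument is shorter and avoids the explicit row-space bookkeeping; the paper's argument is more concrete and makes the link to the real generic result visible without the identity-principle step. Either way, the first hypothesis ($\rank A=s$) is handled identically via \cref{lemma: LID herm}, and both proofs conclude by invoking \cref{thm:r>n2}.
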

\begin{proof}
Let $\tilde{A}:=[\text{vec}(A_1),\ldots,\text{vec}(A_s)]$,
$
\tilde{B}:=[\text{vec}(B_1),\ldots,\text{vec}(B_s)]$, 
and $\tilde{C}:=[\text{vec}(C_1),\ldots,\text{vec}(C_s)].
$
Since $\tilde{A}\in \C^{n_1^2 \times s} $,
Lemma~\ref{lemma: LID herm} yields that for generic psd matrices
$\{A_i\}_{i=1}^s$, we have $\rank(\tilde{A}) = s $ when $s\le n_1^2 $.

In addition, we have $\mathcal{C}(\tilde{B})\odot \mathcal{C}(\tilde{C}) \in \C^{\frac{n_2^2(n_2^2-1)}{2}\frac{n_3^2(n_3^2-1)}{2} \times \frac{s(s-1)}{2}} $.
By the assumption,
\[
\frac{s(s-1)}{2} \le \frac{n_2^2(n_2^2-1)}{2}\frac{n_3^2(n_3^2-1)}{2} .
\]
Let $\varphi:\mathbb{H}^n \to \R^{n^2}$
be the map defined in \eqref{eq: Herm to R}.
Denote $\hat{B}:=[\varphi(B_1),\ldots,\varphi(B_s)]$ 
and $\hat{C}:=[\varphi(C_1),\ldots,\varphi(C_s)] $. 
Let $\tilde{R}_i$ and $\hat{R}_i$ be the $i$th row of $\tilde{B} $ and $\hat{B}$ respectively. 
Row vectors $\tilde{R}_i^{\re},\tilde{R}_i^{\im}$
are the real part and the imaginary part of $\tilde{R}_i$ respectively.
Each row of $\mathcal{C}(\tilde{B})$ must be in the form of 
$\mathcal{C}([\tilde{R}_i^T,\tilde{R}_j^T]^T) $ for some $i\neq j$.
It is not hard to check that
\[
  \mathcal{C}([\tilde{R}_i^T,\tilde{R}_j^T]^T)\in 
  \text{Row}\big(\mathcal{C}([(\tilde{R}_i^{\re})^T,(\tilde{R}_j^{\re})^T,
  (\tilde{R}_i^{\im})^T,(\tilde{R}_j^{\im})^T]^T)\big).
\]
The $\tilde{R}_i^{\re},\tilde{R}_j^{\re},\tilde{R}_i^{\im},\tilde{R}_j^{\im}$
are rows of $\hat{B} $. Thus $\mathcal{C}([\tilde{R}_i^T,\tilde{R}_j^T]^T)$ 
is in $\text{Row}(\mathcal{C}(\hat{B}))$ as well. 
It implies that $\text{Row}(\mathcal{C}(\tilde{B}))\subset \text{Row}(\mathcal{C}(\hat{B}))$.
Similarly, each row of $\mathcal{C}(\hat{B})$ 
must be in the form of $\mathcal{C}([\hat{R}_i^T,\hat{R}_j^T]^T) $ for some $i\neq j$.
For each row $\hat{R}_i$, there is a corresponding row $\tilde{R}_{i'}$ of $\tilde{B}$
such that $\hat{R}_i=\tilde{R}^{\re}_{i'}$ or $\hat{R}_i=\tilde{R}^{\im}_{i'}$.
One can check that
\[
  \mathcal{C}([\hat{R}_i^T,\hat{R}_j^T]^T)\in
  \text{Row}\big(\mathcal{C}([\tilde{R}_{i'}^T,
  \overline{\tilde{R}^T_{i'}},\tilde{R}_{j'}^T,
  \overline{\tilde{R}^T_{j'}}]^T)\big).
\]
The $\tilde{R}_{i'},\tilde{R}_{j'},\overline{\tilde{R}}_{i'},\overline{\tilde{R}}_{j'}$
are rows of $\tilde{B}$. Thus $\mathcal{C}([\hat{R}_i^T,\hat{R}_j^T]^T)$
is in the row space of $\mathcal{C}(\tilde{B})$. 
It implies that $\text{Row}(\mathcal{C}(\hat{B}))\subset \text{Row}(\mathcal{C}(\tilde{B}))$.
Therefore, $\text{Row}(\mathcal{C}(\tilde{B}))=\text{Row}(\mathcal{C}(\hat{B}))$.
By the same argument, it also holds that 
$\text{Row}(\mathcal{C}(\tilde{C}))=\text{Row}(\mathcal{C}(\hat{C}))$.
There exist nonsingular matrcies $U_B,U_C$ such that
$\mathcal{C}(\tilde{B})=U_B\mathcal{C}(\hat{B})$ and 
$\mathcal{C}(\tilde{C})=U_C\mathcal{C}(\hat{C})$. So,
\[
\mathcal{C}(\tilde{B})\odot \mathcal{C}(\tilde{C}) =
\mathcal{C}(U_B\hat{B})\odot \mathcal{C}(U_C\hat{C}) =
(U_B\boxtimes U_C)(\mathcal{C}(\hat{B})\odot \mathcal{C}(\hat{C})).
\]
Note that $U_B\boxtimes U_C$ is nonsingular since $U_B,U_C$ are nonsingular.
Therefore, $\mathcal{C}(\tilde{B})\odot \mathcal{C}(\tilde{C})$
has linearly independent columns if and only if
$\mathcal{C}(\hat{B})\odot \mathcal{C}(\hat{C})$ has linearly independent columns.
By Theorem~2.5 in \cite{de2006link}, $\mathcal{C}(\hat{B})\odot \mathcal{C}(\hat{C})$
has linearly independent columns for generic 
$\hat{B} \in \R^{n_2^2\times s},\hat{C}\in \R^{n_3^2\times s}$ when
\[
\frac{s(s-1)}{2} \le \frac{n_2^2(n_2^2-1)}{2}\frac{n_3^2(n_3^2-1)}{2} .
\]
Then we have $\mathcal{C}(\tilde{B})\odot \mathcal{C}(\tilde{C})$
has linearly independent columns for generic Hermitian matrices
$\{B_i\}_{i=1}^s,\{C_i\}_{i=1}^s $.
The set of all psd Hermitian matrices has positive Lebesgue measure
in Hermitian matrices space. Therefore,
$\mathcal{C}(\tilde{B})\odot \mathcal{C}(\tilde{C})$
has linearly independent columns for generic psd Hermitian matrices 
$\{B_i\}_{i=1}^s,\{C_i\}_{i=1}^s$.

Theorem~\ref{thm:r>n2} implies that if $A_i, B_i, C_i$ 
are generic in $\bH_+^{n_1},  \bH_+^{n_2}, \bH_+^{n_3}$ respectively, 
then $\rank(\bT(\cH)) = s $ and $\bT(\cH) $ 
has the unique rank decomposition when $s$ is in the required range.
\end{proof}

\begin{remark}
Theorem~\ref{prop:r>n2} only discusses the case $m=3$.
For $m > 3$, we can proceed similarly.
Suppose  $n_1\ge n_2 \ge \cdots \ge n_m$ and
$\cH\in \sS_\C^{[n_1,\ldots,n_m]}$ is given as
\[
\begin{array}{c}
\cH \, = \, \sum_{i=1}^r[u_i^1, \ldots, u_i^m]_{\otimes h}.
\end{array}
\]
We can flatten $\cH$ to a cubic order tensor in
$\C^{n_1^2\times n_2^2 \times M^2} $, with $M = n_3n_4\cdots n_m$.
Theorem~\ref{prop:r>n2} can then be
equivalently applied to the new tensor.
\end{remark}

When the rank $r$ is low,
the tensor $\bT(\cH)$ has a unique rank decomposition.
If this decomposition is given by \eqref{Eq:DecompositionT} for psd matrices $B_j^i$,
then $\cH$ is separable and we can also get psd decompositions.

We now present some examples illustrating this.
To compute tensor decompositions for $\bT(\cH)$ 
we use the software \texttt{Tensorlab}~\cite{tensorlab}.
The computation is implemented in MATLAB R2019b,
on an \verb!Intel(R) Core(TM) i7-8550U CPU!
with $3.79$~GHz and $16$ GB of RAM.

\begin{example}
Let $\cH \in \sS_\C^{[8,8,8]} $ be given by the psd decomposition
\[
\mathfrak{m}(\cH) = A\boxtimes B \boxtimes I_8+B\boxtimes I_8 \boxtimes A+ I_8\boxtimes A \boxtimes B,
\]
where $I_8$ is the $8\times 8$ identity matrix
and the matrices $A,B$ are given by
\[
A_{ij} = \left \{
\begin{array}{ll}
  7+i^2 & \text{if $i=j$}  \\
  6+(j-i)\sqrt{-1} & \text{if $i\neq j$}
\end{array},\right.
B_j^i = \sum_{k=1}^8 \exp(\frac{k-1}{8}(i-j)\pi\sqrt{-1}).
\]
The tensor $\cH$ satisfies the condition of
Theorem~\ref{Thm:uniqueCpsdDecomposition-smallS}.
We use \texttt{Tensorlab} to decompose $\bT(\cH)$
and successfully obtain the above psd decomposition.
The computation took about $0.2$ second.
\end{example}

\subsection{The case $m=2$}
\label{Subsec:m=2}

When $m=2$, the tensor $\bT(\cH)$ has order $2$, i.e., it is a matrix.
Matrix decompositions are never unique, unless the rank is one.
Thus, the previous uniqueness results for tensor decompositions
are not applicable for the case $m=2$.
However, we can use a different tensor flattening for Hermitian tensors.

Suppose $n_1\ge n_2$ and $\mathcal{H}\in \mathbb{C}^{[n_1,n_2]}$
is expressed as ($\lmd_i \in \R$)
\begin{equation}\label{eq: sep H}
    \cH \,=\, \lmd_1 [u_1,v_1]_\oh + \cdots + \lmd_r [u_r,v_r]_\oh.
\end{equation}
We define the new tensor flattening $\bT_1(\cH)$
on $\mathbb{C}^{[n_1,n_2]}$ such that
\[
\bT_1(\cH) = \lmd_1 u_1\otimes \overline{u}_1 \otimes (v_1\boxtimes \overline{v}_1)
+ \cdots  + \lmd_r u_r\otimes \overline{u}_r \otimes (v_r\boxtimes \overline{v}_r).
\]
Note that $\cH$ is separable if and only if $\bT_1(\cH)$
has the decomposition
\begin{equation}\label{eq:T2H}
\bT_1(\cH) = \sum_{i=1}^{R} a_i\otimes \overline{a}_i \otimes \text{vec}(B_i),
\end{equation}
for $a_i \in \mathbb{C}^{n_1}$ and $B_i \in \bH_+^{n_2}$.
The minimum $R$ in \eqref{eq:T2H} may not be the psd rank of $\cH$.
It is only an upper bound, i.e.,  $R \ge \psdrank(\cH) $.
When $R$ is small, the decomposition \eqref{eq:T2H}
can be obtained from the tensor decomposition of $\bT_1(\cH)$.

\begin{theorem} \label{prop:m=2}
Let $n_1\ge n_2 \ge 2$. Suppose $\cH \in \sS_\C^{[n_1,n_2]}$ is given
as in the decomposition \eqref{eq:T2H}. Assume that
\begin{equation} \label{rel:Rln1:m=2}
R \le \max(n_1,n_2^2), \qquad
\frac{R(R-1)}{2} \le \frac{l(l-1)}{2} \cdot \frac{n_1(n_1-1)}{2},
\end{equation}
where $l:=\min(n_1,n_2^2)$.
If all $a_i$ are generic in $\C^{n_1}$ and $B_i$ are generic in $\bH_+^{n_2}$,
then $\rank(\bT_1(\cH))=R$ and $\bT_1(\cH)$ has the unique rank decomposition
as in \eqref{eq:T2H}, up to scaling and permutation.
\end{theorem}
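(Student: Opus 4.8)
The plan is to view $\bT_1(\cH)$ as a third order tensor in $\C^{n_1\times n_1\times n_2^2}$ with factor matrices $A:=[a_1,\ldots,a_R]\in\C^{n_1\times R}$ (first mode), its entrywise conjugate $\overline A$ (second mode) and $\tilde B:=[\Vect(B_1),\ldots,\Vect(B_R)]\in\C^{n_2^2\times R}$ (third mode), and to apply \cref{thm:r>n2} after designating one mode as the full-column-rank factor. Since $l\le n_2^2$, \eqref{rel:Rln1:m=2} always gives $\binom R2\le\binom l2\binom{n_1}{2}\le\binom{n_2^2}{2}\binom{n_1}{2}$, while the hypothesis $R\le\max(n_1,n_2^2)$ splits into two cases (the case $R\le1$ being trivial): either $R\le n_1$, or $R>n_1$, in which case $R\le n_2^2$, so $n_1\le n_2^2$, $l=n_1$, and \eqref{rel:Rln1:m=2} reads $\binom R2\le\binom{n_1}{2}^2$.

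\emph{First case, $R\le n_1$.} I would take the first mode $A$ as the full-rank factor: $A$ has rank $R$ for generic $a_i$. By \cref{thm:r>n2} it then suffices that $\mathcal{C}(\overline A)\odot\mathcal{C}(\tilde B)$ have linearly independent columns. Exactly as in the proof of \cref{prop:r>n2}, writing $\hat B:=[\varphi(B_1),\ldots,\varphi(B_R)]$ with $\varphi$ from \eqref{eq: Herm to R}, one has $\text{Row}(\mathcal{C}(\tilde B))=\text{Row}(\mathcal{C}(\hat B))$, hence $\mathcal{C}(\tilde B)=U_B\mathcal{C}(\hat B)$ for a nonsingular $U_B$, so $\mathcal{C}(\overline A)\odot\mathcal{C}(\tilde B)=(I\boxtimes U_B)\bigl(\mathcal{C}(\overline A)\odot\mathcal{C}(\hat B)\bigr)$ with $I\boxtimes U_B$ nonsingular, and it is enough that $\mathcal{C}(\overline A)\odot\mathcal{C}(\hat B)$ have linearly independent columns. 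This holds for generic $\overline A\in\C^{n_1\times R}$ and generic real $\hat B$ by Theorem~2.5 of \cite{de2006link} (the statement used in the proof of \cref{prop:r>n2}); although $\overline A$ is complex, the condition is the non-vanishing of a minor, so the real genericity statement suffices, and it applies because $\binom R2\le\binom{n_1}{2}\binom{n_2^2}{2}$. Then \cref{thm:r>n2} yields $\rank(\bT_1(\cH))=R$ together with uniqueness of \eqref{eq:T2H}.

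\emph{Second case, $R>n_1$ (so $n_1<R\le n_2^2$).} Now I would take the third mode $\tilde B$ as the full-rank factor: by \cref{lemma: LID herm}, for generic $B_i$ the matrices $B_1,\ldots,B_R$ are linearly independent in $\bH^{n_2}$ (recall $R\le n_2^2=\dim_\R\bH^{n_2}$), so $\tilde B$ has rank $R$. By \cref{thm:r>n2} it remains to prove that $\mathcal{C}(A)\odot\mathcal{C}(\overline A)$ has linearly independent columns. This is the one point that goes beyond \cref{prop:r>n2}, since here the two remaining factors are conjugate rather than independent, and the row-space reduction of the first case no longer closes up: a row of $\mathcal{C}(\hat A)$, with $\hat A$ stacking the real and imaginary parts of the rows of $A$, also produces ``mixed'' minors $\mathcal{C}([R_p^T,\overline{R_q^T}]^T)$ that lie in the row space of neither $\mathcal{C}(A)$ nor $\mathcal{C}(\overline A)$. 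The hard part will be this step, which I would handle directly. By Theorem~2.5 of \cite{de2006link} in its complex form (which follows from the real one, the condition being again a minor non-vanishing), for generic $P,Q\in\C^{n_1\times R}$ the matrix $\mathcal{C}(P)\odot\mathcal{C}(Q)$ has linearly independent columns when $\binom R2\le\binom{n_1}{2}^2$; equivalently, some maximal minor $g(P,Q)$ of $\mathcal{C}(P)\odot\mathcal{C}(Q)$, a polynomial in the entries of $P$ and $Q$, satisfies $g\not\equiv0$. Since $\mathcal{C}(\overline A)=\overline{\mathcal{C}(A)}$, it then suffices to note that $g(A,\overline A)$ is not identically zero in $A$: writing $A=A^{\re}+\sqrt{-1}A^{\im}$, the substitution $(P,Q)=(A^{\re}+\sqrt{-1}A^{\im},\,A^{\re}-\sqrt{-1}A^{\im})$ is an invertible linear change of variables, so $g(A,\overline A)$ is a nonzero polynomial in $(A^{\re},A^{\im})$ and hence nonzero on a set of full Lebesgue measure. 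Thus $\mathcal{C}(A)\odot\mathcal{C}(\overline A)$ has linearly independent columns for generic $a_i$, and \cref{thm:r>n2} again yields $\rank(\bT_1(\cH))=R$ and uniqueness.

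Finally I would assemble the genericity: each condition used above (full rank of $A$ or of $\tilde B$, and independence of the columns of the relevant Khatri--Rao product of compound matrices) is the non-vanishing of a polynomial in the real coordinates of the $a_i$ and of $\varphi(B_i)$; since $B\mapsto\varphi(B)$ parametrizes $\bH^{n_2}$ and $\bH_+^{n_2}$ has positive Lebesgue measure in $\bH^{n_2}$, genericity over $\bH_+^{n_2}$ is enough, exactly as at the end of the proof of \cref{prop:r>n2}. I expect the conjugate-factor independence $\mathcal{C}(A)\odot\mathcal{C}(\overline A)$ in the second case to be the only genuinely new difficulty; everything else is a rearrangement of the argument for \cref{prop:r>n2}.
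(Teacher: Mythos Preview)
Your proof is correct and follows the same route the paper indicates---apply \cref{thm:r>n2} to $\bT_1(\cH)$ by analogy with the proof of \cref{prop:r>n2}---but you are considerably more careful than the paper, whose proof is a two-sentence sketch. In particular, you correctly identify and resolve the one genuinely new issue the paper does not mention: in the case $R>n_1$, the two remaining factors $A$ and $\overline A$ are conjugate rather than independent, so the row-space reduction used in \cref{prop:r>n2} is not available, and one must argue directly that $\mathcal{C}(A)\odot\mathcal{C}(\overline A)$ has independent columns generically. Your invertible change of variables $(P,Q)\mapsto(A^{\re},A^{\im})$ handles this cleanly.
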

\begin{proof}
The proof is quite similar to that for Theorem~\ref{prop:r>n2}.
When $R$ satisfies the relation~\eqref{rel:Rln1:m=2},
the conditions for Theorem~\ref{thm:r>n2} are satisfied for generic
$a_i \in \C^{n_1}$ and generic $B_i \in \bH^{n_2}$.
Therefore, we have $\rank(\bT_1(\cH))=R $ and the rank decomposition is unique,
up to scaling and permutation.
\end{proof}

If the flattening tensor $\bT_1(\cH)$ has a small rank $R$, say, $R \le l$,
then the rank decomposition of $\bT_1(\cH)$ can be computed
(e.g., by {\tt Tensorlab}).
For the case $R > l$, computing the rank decomposition of $\bT_1(\cH)$ is harder.
If the rank decomposition of $\bT_1(\cH)$ is in the form \eqref{eq:T2H},
then $\cH$ must be separable.

\begin{remark}
Let $\mathcal{H}\in \sS_\C^{[n_1,n_2]}$
be the separable Hermitian tensor as in \eqref{eq: sep H}, where $n_1\ge n_2$.
In addition to $\bT_1(\cH)$, we may also introduce another new tensor
\[
		\bT_2(\cH) := (u_1\boxtimes \overline{u}_1)\otimes v_1\otimes \overline{v}_1
+ (u_r\boxtimes \overline{u}_r)\otimes v_r\otimes \overline{v}_r
\]
Similarly, $\cH$ is separable if and only if $\bT_2(\cH)$ has the decomposition
\[
\begin{array}{c}
\bT_2(\cH) \, = \, \sum_{i=1}^R \Vect(A_i)\otimes b_i \otimes \tilde{b}_i ,
\end{array}
\]
where $A_i\in \bH^{n_1}_+$. Equivalently, if
\[
\begin{array}{c}
R\le n_1^2, \quad
\frac{R(R-1)}{2} \le \frac{n_2(n_2-1)}{2}\frac{n_2(n_2-1)}{2},
\end{array}
\]
then $\rank(\bT_2(\cH))=R$ and the above decomposition is unique for generic
$b_i \in \C^{n_2}$ and $A_i \in \bH_+^{n_1}$.
For some cases, it may give a better upper bound than Theorem~\ref{prop:m=2}.
Therefore, we can also use the rank decomposition of $\bT_2(\cH)$
to certify separability of Hermitian tensors.
\end{remark}

We conclude this section with some examples showcasing our methods for the specific case $m=2$.

\begin{example}
Consider the Hermitian tensor $\cH\in \sS_\C^{[4,3]}$ such that
\[
\cH_{i_1i_2j_1j_2} = (4i_1j_1+\sqrt{-1}(j_1-i_1)+1)\delta_{i_2j_2}+i_1j_1(i_2-j_2)\sqrt{-1}
\]
where $\delta_{ij} = 1$ for $i=j$ and $\delta_{ij} = 0$ otherwise.
\texttt{Tensorlab} yields the following tensor decomposition for $\bT_1(\cH)$:
{\scriptsize
\[
%\bT_1(\cH) =
\begin{pmatrix}
    1 \\ 2 \\3 \\4
  \end{pmatrix}\otimes
 \begin{pmatrix}
    1 \\ 2 \\3\\ 4
\end{pmatrix} \otimes \text{vec}
 \begin{pmatrix}
    3 & -\sqrt{-1} & -2\sqrt{-1} \\
     \sqrt{-1} & 3  & -\sqrt{-1}  \\
     2\sqrt{-1} & \sqrt{-1} & 3
\end{pmatrix}
+
\begin{pmatrix}
    1+\sqrt{-1} \\ 2+\sqrt{-1} \\3+\sqrt{-1}\\4+\sqrt{-1}
\end{pmatrix}\otimes
 \begin{pmatrix}
    1-\sqrt{-1} \\ 2-\sqrt{-1} \\3-\sqrt{-1}\\4-\sqrt{-1}
\end{pmatrix} \otimes
 \text{vec}(I_3),
\]}\noindent
where $I_3$ denotes the $3\times 3$ identity matrix.
The above decomposition certifies that $\cH$ is separable.
The computation took about 0.1 second.
\end{example}

\section{Conclusions and discussions}
\label{Sec:Conclusion}

This paper studies how to detect separability of Hermitian tensors
and how to compute psd decompositions.
The Algorithm~\ref{Alg:membership-in-sep-cone} is given to
decide whether a given Hermitian tensor is separable.
It is based on solving Lasserre type moment relaxations.
Its asymptotic and finite convergence are proved under certain conditions.
If a given Hermitian tensor is not separable,
the algorithm can detect the non-separability;
if it is separable, the algorithm yields a positive Hermitian decomposition.
Consequently, this settles the ``quantum separability problem'' in quantum physics.
We remark that Algorithm~\ref{Alg:membership-in-sep-cone} can detect separability
for all Hermitian tensors, no matter
if the dimensions/ranks are low or high.
However, this method does not guarantee that
the computed decomposition has the shortest length.
It is important future work to find the shortest
positive Hermitian decompositions for separable Hermitian tensors.

\begin{problem}
For a separable Hermitian tensor $\cH$,
how to find its shortest positive Hermitian decomposition?
\end{problem}

Furthermore, the paper discusses the psd decompositions and psd ranks
for separable Hermitian tensors.
If the Hermitian tensor has a certain low psd rank,
we are able to get psd decompositions,
based on uniqueness results of tensor decompositions.
This gives a second approach to certify separability of Hermitian tensors.
The first approach, i.e., Algorithm~\ref{Alg:membership-in-sep-cone}
based on solving moment optimization,
is able to detect separability for all Hermitian tensors.
However, in practice, it is limited to relatively small sized Hermitian tensors,
since it requires to solve a hierarchy of semidefinite relaxations.
The second approach, i.e., the methods in Section~\ref{Sec:C-psd-decomp}
based on unique tensor decompositions and tensor flattenings,
can detect separability for larger sized Hermitian tensors.
However, it only provides a sufficient condition for separability,
but it cannot detect non-separability.
Along this way, we made a good start on determine
psd ranks of separable Hermitian tensors.
Although these results shed some light on the challenging question of determining
psd ranks, much work is left to be done for fully solving the question.
Determining psd decompositions and psd ranks for more general cases
are important future work.

We would like to remark that the methods proposed in the paper
can also be applied to detect the $\R$-separability.
To be $\R$-separable, a Hermitian tensor $\cH$ must be $\R$-Hermitian decomposable
(i.e., \eqref{Eq:rank-oneHD} holds for real vectors $u_i^j$).
It is interesting to note that $\R$-Hermitian decomposable tensors
are $\R$-separable if and only if they are $\C$-separable
(see \cite[Lemma~6.2]{NYHerm20}).
Therefore, the $\R$-separability of $\R$-Hermitian decomposable tensors
can also be detected by checking their $\C$-separability.
In fact, detecting $\R$-separable is easier than detecting $\C$-separability,
since the imaginary part vectors $x_j^{\Im}$ are all zero
for $\R$-separable Hermitian tensors.
Similar moment optimization problems \eqref{program:sep-meas},
\eqref{program:sep-momcone} and their semidefinite relaxations for
\eqref{program:sep-SDP}, \eqref{program:sep-SDP-dual}
can be formulated for $\R$-separability.
For $m = 2$, we refer to the work \cite{NieZhang16}.
For $m > 2$, similar work can be done.

\end{document}